\author{H. Egger \and B. Radu}
\address{Department of Mathematics, TU Darmstadt, Germany}
\email{egger@mathematik.tu-darmstadt.de}
\email{radu@gsc.tu-darmstadt.de}
\title[Super-convergence and post-processing for the wave equation]{Super-convergence and post-processing for mixed finite element approximations of the wave equation}
\newtheorem{lemma}{Lemma}[section]
\newtheorem{problem}[lemma]{Problem}
\newtheorem{theorem}[lemma]{Theorem}
\theoremstyle{definition}
\newtheorem{remark}[lemma]{Remark}
\newtheorem*{example*}{Example}
\def\div{\mathrm{div}}
\def\grad{\nabla}
\def\tnorm{|\!|\!|}
\def\dt{\partial_t}
\def\dtt{\partial_{tt}}
\def\dttt{\partial_{ttt}}
\def\dtttt{\partial_{t}^{4}}
\def\u{u}
\def\p{p}
\def\eps{\epsilon}
\def\RR{\mathbb{R}}
\def\Th{\mathcal{T}_h}
\def\R{\mathbb{R}}
\def\dtau{\overline\partial_\tau}
\def\P{\text{P}}
\def\T{\mathcal{T}}
\def\BDM{\text{BDM}}
\numberwithin{equation}{section}
\numberwithin{table}{section}
\numberwithin{figure}{section}
\begin{document}

\begin{abstract} 
We consider the numerical approximation of acoustic wave propagation problems by mixed $\BDM_{k+1}$--$\P_k$ finite elements on unstructured meshes. Optimal convergence of the discrete velocity and super-convergence of the pressure by one order are established. 
Based on these results, we propose a post-processing strategy that allows us to construct an improved pressure approximation from the numerical solution. 
Corresponding results are well-known for mixed finite element approximations of elliptic problems and we extend these analyses here to the hyperbolic problem under consideration. 
We also consider the subsequent time discretization by the Crank-Nicolson method and 
show that the analysis and the post-processing strategy can be generalized to the fully discrete schemes. 
Our proofs do not rely on duality arguments or inverse inequalities and the results therefore apply also for non-convex domains and non-uniform meshes. 
\end{abstract}

\maketitle

\begin{quote}
\noindent 
{\small {\bf Keywords:} 
acoustic wave equation, 
hyperbolic systems,
mixed finite element methods, 
super-convergence,
post-processing}
\end{quote}

\begin{quote}
\noindent
{\small {\bf AMS-classification (2000):}
35L05, 35L50, 65L20, 65M60}
\end{quote}


\section{Introduction} \label{sec:intro}

The propagation of pressure waves of small amplitude in a Newtonian fluid or an elastic solid can be modeled by hyperbolic systems of the form
\begin{align}
a \dt \p + \div\, \u &= 0, \label{eq:sys1}\\
b \dt \u + \grad \p &= 0. \label{eq:sys2}
\end{align}
Here $\p$ and $\u$ denote the pressure and velocity fields, and the parameters $a$ and $b$ encode the material properties, e.g., the density of the medium or the speed of sound.
The two equations describe the conservation of mass and momentum 
and they can be derived under certain simplifying assumptions from the 
Euler equations of motion \cite{LandauLifshitz6}.

Finite element methods have been used quite successfully for the space discretization of wave propagation problems since many years. 
One common approach is to first rephrase the system \eqref{eq:sys1}--\eqref{eq:sys2} in its second order form
\begin{align}
a \dtt \p - \div ( b^{-1} \nabla \p) &= 0, \label{eq:sof}
\end{align}
which results from elimination of the velocity field $\u$ via the momentum equation.
This second order form can then be discretized in space by standard continuous piecewise polynomial finite elements. A sub-sequent time discretization by appropriate time-stepping schemes leads to efficient and reliable fully discrete methods with good stability and approximation properties; see \cite{Baker76,BakerBramble79,BangerthRannacher99,CohenJolyRobertsTordjman01,Dupont73} 
for such schemes and their analysis.

The standard Galerkin approximation of the wave equation \eqref{eq:sof} can also be interpreted as a particular mixed discretization of the first order system \eqref{eq:sys1}--\eqref{eq:sys2}, with emphasis on the approximation of the pressure variable $p$.
In many cases, however, the velocity field is the quantity of main interest.
Then alternative dual mixed finite element approximations may be more appropriate, 
where the approximation of the velocity $u$ is emphasized instead.
We refer to \cite{CowsarDupontWheeler96,Geveci88,JenkinsRiviereWheeler02,Karaa11,WangBathe97} for results in this direction and to \cite{Cohen02,Joly03} for a comprehensive survey on mixed finite element methods for wave propagation problems.

\medskip 

In this paper, we consider the discretization of the so-called velocity-pressure formulation \eqref{eq:sys1}--\eqref{eq:sys2} by mixed finite element schemes based on $\BDM_{k+1}$-$\P_{k}$ elements \cite{BrezziDouglasMarini85}. 
Related finite element approximations
have been considered in \cite{Geveci88}, and mixed methods for the equivalent displacement-stress formulation have been investigated in \cite{CowsarDupontWheeler96,JenkinsRiviereWheeler02}. 
From the analyses presented in these works, one can infer that the errors of the $\BDM_{k+1}$-$\P_{k}$ semi-discretization can be bounded by
\begin{align} \label{eq:ee}
\|p(t) - p_h(t)\|_{L^2(\Omega)} + \|u(t)-u_h(t)\|_{L^2(\Omega)}  \le C h^{k+1}, 
\end{align}
provided that the solution $(p,u)$ is sufficiently smooth. 
This estimate is optimal in view of the approximation properties of the pressure space $\P_{k}$, but sub-optimal regarding the velocity approximation in $\BDM_{k+1}$. 
Note that for related discretizations based on $\text{RT}_k$-$\P_k$ elements, the estimate 
is of optimal order in both variables. 

As a first result of our paper, we will prove that also for the $\BDM_{k+1}$-$\P_k$ approximation, the error is in fact of optimal order, more precicely, 
\begin{align} \label{eq:ee2}
\|\pi_h p(t) - p_h(t)\|_{L^2(\Omega)} + \|u(t)-u_h(t)\|_{L^2(\Omega)}  \le C h^{k+2}.
\end{align}
Here $\pi_h$ denotes the $L^2$-projection onto the discrete pressure space. 
This estimate also shows that the projected pressure error even exhibits super-convergence by one order. 
The overall approximation therefore is balanced, although different polynomial orders for the approximation of $p$ and $u$ are used.
Similar super-convergence results are well-known for elliptic problems, 
see e.g. \cite{BoffiBrezziFortin13,BDDF87,BrezziDouglasMarini85} and \cite[Ch.~1]{BoffiEtAl08} for a comprehensive overview. 
Here we extend these results to the hyperbolic problem under consideration.

Let us put this first result into perspective of previous work:
In \cite{Chen99}, an improved error estimate $\|\pi_h p(t) -p_h(t)\|\le C h^{2}$ for the projected pressure error 
has been established for the $\text{RT}_0$-$\P_0$ discretization of the wave equation. 
The analysis there however assumes the underlying elliptic problem to be $H^2$-regular, which we do not require here.
In addition, only first order convergence
$\|u(t)-u_h(t)\| \le C h$ for the velocity can be obtained 
due to the inferior approximation of the velocities in $\text{RT}_0$. 
Moreover, the extension of the analysis in \cite{Chen99} to the time discrete problem does  not seem straight-forward. 

Optimal order estimates $\|u(t)-u_h(t)\|\le C h^{k+2}$ have been established in \cite{Monk92} for a second order formulation of Maxwell's equations; see also \cite{MakridakisMonk95} for the subsequent time discretization. 
Although the second solution component is not present in this formulation,
the estimates of these works can be seen to be related to \eqref{eq:ee2} in two dimension.

Various super-convergence results for approximations of wave phenomena are available for structured or almost structured grids; see e.g. \cite{Chen96,ChenHuang98,DouglasDupontWheeler78,WangChenTang13}. 
In contrast to these approaches, which are usually based on duality arguments and require regularity of the underlying meshes, our arguments here are based only on certain compatibility conditions of the approximation spaces, and our results therefore apply to more general situations.

\medskip 

Based on sharp estimates for the projected pressure error, different post-processing strategies have been proposed for mixed finite element approximations of elliptic problems; see e.g. \cite{ArnoldBrezzi85,BrezziDouglasMarini85,Stenberg91}.
These aim at constructing improved approximations $\widetilde p_h$ for the pressure from  the discrete solution $(p_h,u_h)$ and the problem data by local computations.
The second contribution of our manuscript will be to extend these results to the hyperbolic problem under consideration. For $t \ge 0$ we construct an improved pressure approximation $\widetilde p_h(t) \in \P_{k+1}$, such that
\begin{align} \label{eq:ee3}
\|u(t)-u_h(t)\|_{L^2(\Omega)} + \|p(t) - \widetilde p_h(t)\|_{L^2(\Omega)} \le C h^{k+2}.
\end{align}
Another post-processing strategy has been proposed in \cite{Chen99} for the discretization of the wave equation by $\text{RT}_0$-$\P_0$ elements. An improved pressure approximation $\widetilde p_h(t)$ is constructed there on a macro mesh and the estimate $\|p(t) - \widetilde p_h(t)\| \le C h^{2}$ is established. 
Apart from the availability of a macro mesh, the proof of this estimate again uses duality arguments and requires $H^2$-regularity of the underlying elliptic problem.
We do not require such assumptions and our results therefore hold for more general domains and unstructured as well as locally refined meshes. 

As a third step of our analysis, we finally also investigate the subsequent time discretization. For ease of presentation, we only consider the Cranck-Nicolson method in detail, but the arguments can be extended to higher order schemes. The super-convergence estimates for the projected pressure error as well as the post-processing strategy are extended to the resulting fully discrete schemes, which give approximations with optimal convergence order in space and time.

\bigskip

The remainder of the manuscript is organized as follows:
In Section~\ref{sec:prelim}, we give a complete description of the problem under investigation and recall some preliminary results. 
Section~\ref{sec:semi} then presents the a-priori error analysis for the semi-discretization. 
The post-processing strategy for the semi-discretization is introduced and analyzed in Section~\ref{sec:post}.
In Section~\ref{sec:full}, we consider the subsequent time discretization by the Crank-Nicolson method and derive order optimal estimates for the velocity and the projected pressure error. 
The construction and the analysis of the post-processing strategy is extended to the fully discrete schemes in Section~\ref{sec:fullpost}.
Some numerical tests are presented in Section~\ref{sec:num}, for illustration of the theoretical results. We conclude with a short summary and a discussion of some open topics for future research.

\section{Preliminaries} \label{sec:prelim}
Let $\Omega\subseteq\R^d$ be a bounded polyhedral Lipschitz domain in dimension $d=2$ or $d=3$, and let $T>0$ denote a finite time horizon. 
We consider the first order hyperbolic system 
\begin{alignat}{3}
a \dt\p + \div\, \u &= f \qquad \text{in }\Omega\times(0,T)\label{eq:sysm1},\\ 
 b \dt\u + \grad \p &= g \qquad \text{in }\Omega\times(0,T)\label{eq:sysm2},
\intertext{with homogeneous boundary conditions}
   p&=0    \qquad \text{on }\partial\Omega\times(0,T),\label{eq:sysm3}
\intertext{and initial values prescribed by}
 p(0)=p_0, \quad u(0)&=u_0 \quad \ \; \text{in }\Omega .\label{eq:sysm4}
\end{alignat}
The coefficient functions $a,b : \Omega \to \RR$ are assumed to be bounded and uniformly positive throughout. 
Using standard arguments of semi-group theory \cite{Evans98,Pazy83}, one can then show 
\begin{lemma}[Classical solution]
For any $u_0 \in H(\div;\Omega)$ and $p_0 \in H_0^1(\Omega)$ and any $f \in W^{1,1}(0,T;L^2(\Omega))$ and $g \in W^{1,1}(0,T;L^2(\Omega)^d)$  there exists a unique classical solution 
\begin{align*}
(p,u)\in C([0,T],H^1_0(\Omega)\times H(\div;\Omega))\times C^1([0,T],L^2(\Omega)\times L^2(\Omega)^d)
\end{align*}
of the system \eqref{eq:sysm1}--\eqref{eq:sysm4} and its norm can be bounded by the norm of the data. 
\end{lemma}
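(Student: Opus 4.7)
The plan is to recast the system \eqref{eq:sysm1}--\eqref{eq:sysm4} as an abstract evolution equation on a Hilbert space and then apply standard results from the theory of strongly continuous semigroups; see e.g.\ \cite{Pazy83}. Let $X = L^2(\Omega) \times L^2(\Omega)^d$, equipped with the weighted inner product $\langle (p,u),(q,v) \rangle_X = \int_\Omega a\, p\, q \, dx + \int_\Omega b\, u\cdot v\, dx$, which, since $a,b$ are bounded and uniformly positive, is equivalent to the standard $L^2$ inner product. Define the operator $A : D(A) \subset X \to X$ by
\begin{align*}
A\begin{pmatrix}p\\u\end{pmatrix} = -\begin{pmatrix} a^{-1}\,\div u \\ b^{-1}\,\grad p \end{pmatrix}, \qquad D(A) = H_0^1(\Omega) \times H(\div;\Omega),
\end{align*}
and set $F(t) = (a^{-1} f(t), b^{-1} g(t))$. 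Then \eqref{eq:sysm1}--\eqref{eq:sysm4} can be rewritten as the Cauchy problem $\dt y = A y + F$, $y(0) = (p_0, u_0) \in D(A)$, in $X$.

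The central step is to verify that $A$ generates a $C_0$-group of isometries on $X$. The essential computation is integration by parts: for $(p,u),(q,v) \in D(A)$, the boundary condition $p|_{\partial\Omega}=0$ (together with $q|_{\partial\Omega}=0$) makes the boundary terms vanish, and one obtains $\langle A(p,u),(q,v)\rangle_X = -\langle (p,u), A(q,v)\rangle_X$, so that $A$ is skew-symmetric. It remains to show that $A$ is in fact skew-adjoint, which by Stone's theorem then yields a unitary $C_0$-group $\{S(t)\}_{t \in \R}$ on $X$. By the standard criterion this is equivalent to showing that the ranges of $I \pm A$ are all of $X$, which in turn is equivalent to the well-posedness in $X$ of the two stationary problems $(I \pm A)(p,u) = (\tilde f, \tilde g)$. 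These problems are, after elimination of $u$, elliptic problems of the form $a p - \div(b^{-1}\grad p) = \tilde f \mp \div(b^{-1}\tilde g)$ with homogeneous Dirichlet data, whose solvability in $H_0^1(\Omega)$ follows directly from the Lax--Milgram lemma; the corresponding $u \in H(\div;\Omega)$ is then recovered from the second equation.

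Once $A$ is identified as the generator of a $C_0$-group, the regularity assumption $F \in W^{1,1}(0,T;X)$ together with $y(0) \in D(A)$ guarantees via the classical inhomogeneous semigroup theory that the variation-of-constants formula $y(t) = S(t) y(0) + \int_0^t S(t-s) F(s)\, ds$ defines a classical solution $y \in C([0,T];D(A)) \cap C^1([0,T];X)$, which translates precisely into the stated regularity of $(p,u)$. Uniqueness is immediate from the semigroup property, and the a-priori bound by the norm of the data follows from the isometry property $\|S(t)\|_{X\to X}=1$ and the straightforward estimate
\begin{align*}
\|y(t)\|_X + \|A y(t)\|_X \le \|y(0)\|_X + \|A y(0)\|_X + \int_0^t \big(\|F(s)\|_X + \|\partial_s F(s)\|_X\big)\, ds,
\end{align*}
after observing that $\|Ay\|_X$ is equivalent to the graph norm on $D(A) = H_0^1(\Omega)\times H(\div;\Omega)$. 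The main technical obstacle is the verification of the range condition for $I \pm A$; the weighted inner product and the mixed boundary behaviour ($p$ is Dirichlet, $u$ only has a normal trace) must be handled carefully, but this reduces to the standard elliptic theory sketched above.
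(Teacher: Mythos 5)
Your proposal is correct and is precisely the standard semigroup argument that the paper invokes without writing out (it only points to \cite{Evans98,Pazy83}): skew-adjointness of the first-order operator in the $(a,b)$-weighted inner product, the range condition for $I\pm A$ via Lax--Milgram for the reduced elliptic problem, and the classical-solution theory for $W^{1,1}$-in-time inhomogeneities with $y(0)\in D(A)$. The only blemish is the final displayed estimate: since $Ay(t)=\dt y(t)-F(t)$, its right-hand side should also carry a term of the form $\|F(0)\|_X$ (or $\sup_{0\le s\le t}\|F(s)\|_X$), which is anyway controlled by the $W^{1,1}(0,T;X)$ norm of the data, so the stated conclusion is unaffected.
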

Here $L^2(\Omega)$, $H^1(\Omega)$, and $H(\div,\Omega)=\{u\in L^2(\Omega)^d\,|\, \div\, u \in L^2(\Omega)\}$ are the usual Lebesgue and Sobolev spaces. The functions in $H_0^1(\Omega)$ additionally have zero trace on the boundary. 
The spaces $C^k([0,T];X)$ and $W^{k,p}(0,T;X)$ consist of functions of time with values in a Hilbert space $X$; we refer to \cite{Evans98} for details and further notation.

\begin{remark}\label{remark:higherregularity}
By formal differentiation of the system with respect to time, one can also obtain existence and uniform bounds for the time derivatives $(\dt^k p,\dt^k u)$ of higher order, provided that the usual compatibility conditions between initial and boundary conditions are satisfied. 
Higher regularity with respect to the space variable can then be obtained via elliptic regularity estimates; we again refer to \cite{Evans98} for details.
\end{remark}

The mixed finite element discretizations investigated in the subsequent sections 
are based on the following variational characterization of classical solutions.

\begin{lemma}[Variational characterization] $ $\\
Let $(p,u)$ denote a classical solution of \eqref{eq:sysm1}--\eqref{eq:sysm4}. 
Then for all $0 \le t \le T$ there holds
\begin{alignat}{5}
(a \dt p(t),q)_\Omega+(\div\, u(t),q)_\Omega &=(f(t),q)_\Omega \qquad \forall q\in L^2(\Omega), \label{eq:var1} \\ %
(b \dt u(t),v)_\Omega-(p(t),\div \,v)_\Omega &=(g(t),v)_\Omega \qquad \forall v\in H(\div,\Omega). \label{eq:var2}
\end{alignat}
\end{lemma}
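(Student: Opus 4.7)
The plan is to derive the two variational identities by testing the strong form of the equations with appropriate test functions and using integration by parts for the momentum equation. Since $(p,u)$ is a classical solution, the equations \eqref{eq:sysm1}--\eqref{eq:sysm2} hold as identities in $L^2(\Omega)$ for every $t \in [0,T]$, so pairing with an $L^2$ test function is straightforward.

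First, I would treat the mass equation \eqref{eq:sysm1}. Because $a \dt p(t) + \div u(t) = f(t)$ holds in $L^2(\Omega)$ for each fixed $t$, I can take the $L^2(\Omega)$-inner product with an arbitrary $q \in L^2(\Omega)$ to obtain \eqref{eq:var1} immediately. Here I use that $\dt p \in L^2(\Omega)$ by the assumed regularity $p \in C^1([0,T];L^2(\Omega))$ and that $\div u \in L^2(\Omega)$ since $u(t) \in H(\div;\Omega)$.

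Next, for the momentum equation \eqref{eq:sysm2}, I take the $L^2$-pairing with $v \in H(\div,\Omega)$ to get $(b\dt u(t),v)_\Omega + (\grad p(t),v)_\Omega = (g(t),v)_\Omega$. The key step is then to integrate by parts in the pressure term. Since $p(t)\in H_0^1(\Omega)$ and $v \in H(\div,\Omega)$, the standard Green's formula
\begin{align*}
(\grad p(t),v)_\Omega = -(p(t),\div v)_\Omega + \langle p(t),v\cdot \nn\rangle_{\partial\Omega}
\end{align*}
applies, with the boundary pairing understood in the $H^{1/2}$--$H^{-1/2}$ duality. The boundary term vanishes by virtue of the homogeneous Dirichlet condition \eqref{eq:sysm3}, yielding \eqref{eq:var2}.

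There is essentially no hard step here; the only subtlety is making sure that the function space regularities of $p(t)$ and $u(t)$ stated in the previous lemma are exactly what one needs for the integration by parts formula to be valid, which they are. The converse direction (every variational solution is classical, if smooth enough) is not required for what follows, so no additional argument is necessary.
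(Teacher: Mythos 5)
Your argument is correct and follows exactly the route the paper takes: test \eqref{eq:sysm1} with $q\in L^2(\Omega)$, test \eqref{eq:sysm2} with $v\in H(\div,\Omega)$, integrate by parts in the pressure term, and use the boundary condition \eqref{eq:sysm3} to eliminate the boundary term. The paper states this in one line; your version merely makes the $H^{1/2}$--$H^{-1/2}$ duality in Green's formula explicit, which is a fine level of detail.
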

\noindent 
The symbol $(p,q)_\Omega=\int_\Omega p q \; dx$ is used here to denote the scalar product of $L^2(\Omega)$. 
\begin{proof}
The assertion follows by testing the equations \eqref{eq:sysm1}--\eqref{eq:sysm2} with appropriate functions, integration-by-parts in the second equation, and use of the boundary conditions.
\end{proof}

\section{Semi-discretization} \label{sec:semi}

\subsection{Preliminaries}
Let $\Th = \{K\}$ be a conforming simplicial mesh of the domain $\Omega$. 
We denote by $h_K$ and $\rho_K$ the diameter and inner circle radius of the element $K$, and call $h=\max_K h_K$ the mesh size.
We assume that $\Th$ is $\gamma$-shape regular, i.e., that $\gamma h_K \le \rho_K \le h_K$ holds for all $K \in \Th$ with some uniform constant $\gamma>0$. 
We further denote by $\P_k(K)$ the space of polynomial functions on $K$ of degree less or equal to $k$.
In our estimates we repeatedly use constants $C$ which may have different values at different occasions, and which may depend on the polynomial degree $k$, on the mesh regularity constant $\gamma$, and on the bounds for the parameters $a$ and $b$. 
These constants will however always be independent of the mesh size $h$. 

As finite dimensional approximations for the function spaces $L^2(\Omega)$ and $H(\div;\Omega)$ arising in the above variational characterization, we consider the finite element spaces
\begin{align}
Q_h
&= \{ q \in L^2(\Omega) : q|_K \in \P_k(K) \ \forall K \in \Th\} =: \P_k(\T_h), \label{eq:Qh}\\
V_h 
&=\P_{k+1}(\T_h)^d \cap H(\div;\Omega)
=:\BDM_{k+1}(\T_h) \cap H(\div;\Omega).
\end{align}
Note that functions in $v_h \in V_h$ have continuous normal components across element interfaces, while functions $q_h \in Q_h$ may be completely discontinuous across interfaces in general. Also recall \cite{BoffiBrezziFortin13,BrezziDouglasMarini85} that for simplicial elements $K$, we have $\P_k(K)^d = \BDM_{k}(K)$,
which explains our notation. 
The following well-known approximation results will be used several times.
\begin{lemma}[Projection operators]\label{lem:projections}
Let $Q_h$ and $V_h$ be defined as above. Then 
\begin{align} \label{eq:spaces}
\div V_h = Q_h,
\end{align}
and there exists a projection operator $\rho_h : H^1(\Th)^d \cap H(\div,\Omega) \to V_h$ such that 
\begin{alignat}{2} \label{eq:commuting}
&\div\, \rho_h u = \pi_h \div\, u \qquad && \forall u \in H^1(\Th)^d \cap H(\div,\Omega). 
\end{alignat}
Here $\pi_h : L^2(\Omega) \to Q_h$ is the $L^2$-orthogonal projection onto $Q_h$ defined by 
\begin{align} \label{eq:pQhK}
(\pi_h p, q_h)_\Omega = (p,q_h)_\Omega \qquad \forall q_h \in Q_h.
\end{align}
Moreover, the following approximation error estimates hold true:
\begin{alignat}{2}
&\|p-\pi_h p\|_{L^2(\Omega)}\leq C h^r |p|_{H^r(\Th)}\quad  && \forall p\in H^r(\Th), \ 0\leq r\leq k+1, \label{eq:approx1}\\
&\|u-\rho_h u\|_{L^2(\Omega)}\leq C h^r |u|_{H^r(\Th)}\quad  && \forall u\in H^r(\Th)^d \cap H(\div,\Omega), 1\leq r\leq k+2, \label{eq:approx2}
\end{alignat}
and $C$ only depends on the shape regularity constant $\gamma$ and the polynomial degree $k$.
\end{lemma}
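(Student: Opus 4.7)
The statement collects four classical facts about the $\BDM_{k+1}$--$\P_k$ element pair. My plan is to treat them in the order listed, invoking the standard BDM literature \cite{BrezziDouglasMarini85, BoffiBrezziFortin13} but sketching the core arguments rather than citing them as a black box.

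The $L^2$-projection $\pi_h$ is the simplest ingredient: since $Q_h$ is discontinuous across element interfaces, $\pi_h$ reduces to the local $L^2$-projection onto $\P_k(K)$ on each element, which exists and is unique; the estimate \eqref{eq:approx1} is then the standard Bramble--Hilbert / affine scaling estimate using that $\pi_h$ reproduces $\P_k(K)$.

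For the interpolant $\rho_h$ I would take the canonical BDM projection, determined on each $K$ by the face moments $\int_F (\rho_h u - u) \cdot \nn\, q \, ds = 0$ for $q \in \P_{k+1}(F)$ on every face of $K$, together with interior moments against a test space containing $\grad \P_k(K)$. The face conditions enforce matching normal traces, so $\rho_h u \in V_h$. Integrating by parts elementwise, for any $q_h \in Q_h$ one obtains
\begin{align*}
(\div \rho_h u, q_h)_K = \int_{\partial K}(\rho_h u \cdot \nn)\, q_h \, ds - (\rho_h u, \grad q_h)_K = (\div u, q_h)_K,
\end{align*}
since $q_h|_F \in \P_k(F) \subset \P_{k+1}(F)$ and $\grad q_h \in \grad \P_k(K)$ make both boundary and interior pairings reproduce the corresponding moments of $u$. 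As $\div \rho_h u \in Q_h$, summing over $K$ yields the commuting property \eqref{eq:commuting}. The surjectivity \eqref{eq:spaces} is then immediate: the inclusion $\div V_h \subseteq Q_h$ follows from $\div \P_{k+1}(K)^d \subseteq \P_k(K)$ elementwise, and for the converse I would lift any $q_h \in Q_h$ to some $u \in H^1(\Omega)^d$ with $\div u = q_h$ (via continuous surjectivity of the divergence, e.g.\ by solving an auxiliary Poisson problem), whereupon $\div \rho_h u = \pi_h \div u = \pi_h q_h = q_h$.

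The approximation estimate \eqref{eq:approx2} finally follows by a Bramble--Hilbert argument on the reference element combined with affine scaling, exploiting that $\rho_h$ reproduces $\P_{k+1}(K)^d$. The step requiring the most care, and the reason for restricting the domain to $H^1(\Th)^d \cap H(\div;\Omega)$ with $r \ge 1$, is the bound on the face contributions to $\rho_h$: these involve normal traces on $\partial K$ and thus require at least piecewise $H^1$ regularity to be controlled via the trace inequality on the reference element, after which the usual shape regularity of $\Th$ allows one to transfer the estimate back to $K$. Beyond this subtlety the remaining bookkeeping is entirely routine.
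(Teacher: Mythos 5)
Your proposal is correct; note that the paper itself states this lemma without proof, deferring to the standard references (\cite{BrezziDouglasMarini85}, \cite[Ch.~2.5]{BoffiBrezziFortin13}), and your sketch is exactly the canonical argument those references contain: the local BDM degrees of freedom (face moments against $\P_{k+1}(F)$, interior moments against a space containing $\grad\P_k(K)$), elementwise integration by parts for the commuting property, a divergence lifting for surjectivity, and Bramble--Hilbert with scaling for the error bounds. The one point requiring care --- that the face moments force the piecewise $H^1$ regularity assumption, which is also why the paper's subsequent remark mentions quasi-interpolation as a way to relax it --- is correctly identified and handled in your argument.
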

We denote here by $H^k(\Th) = \{ q \in L^2(\Omega) : q|_K \in H^k(K)\}$ the broken Sobolev space of piecewise smooth functions, in analogy to our notation for piecewise polynomial spaces.
\begin{remark}
The relation \eqref{eq:commuting} is known as \emph{commuting diagram property} and will play an important role in our analysis. The projection operator $\rho_h$ can be defined locally for every element, which is why some additional regularity is required for the definition of $\rho_h$; see e.g. \cite[Chapter~2.5.2]{BoffiBrezziFortin13}. This artificial smoothness requirements could be relaxed, if a commuting quasi-interpolation operator \cite{Schoeberl01} would be used instead. 
\end{remark}

\subsection{Mixed finite element approximation}

For the discretization of the initial boundary value problem \eqref{eq:sysm1}--\eqref{eq:sysm4} in space, we consider the following mixed finite element approximation of the variational principle stated above.

\begin{problem}[Galerkin semi-discretization]\label{problem:semidiscrete} $ $\\
Find $(p_h,u_h) \in H^1(0,T;Q_h \times V_h)$ with 
\begin{alignat}{2}
p_h(0)=\pi_h p_0 \qquad \text{and} \qquad u_h(0)=\rho_h u_0,
\end{alignat} 
and such that for all $0 \le t \le T$ there holds
\begin{alignat}{2}
(a \dt p_h(t),q_h)_\Omega+(\div\, u_h(t),q_h)_\Omega &=(f(t),q_h)_\Omega \qquad \forall q_h\in Q_h, \label{eq:sysvd1} \\
(b \dt u_h(t),v_h)_\Omega-(p_h(t),\div\, v_h)_\Omega &=(g(t),v_h)_\Omega \qquad \forall v_h\in V_h \label{eq:sysvd2}.
\end{alignat}
\end{problem}

Existence and uniqueness of a semi-discrete solution $(p_h,u_h)$ follows immediately from the Picard-Lindelöf theorem. 
Moreover, the following discrete stability estimate holds. 
\begin{lemma}[Discrete energy estimate] $ $\\
Let $(p_h,u_h)$ denote the solution of Problem~\ref{problem:semidiscrete}.
Then 
\begin{align} \label{eq:discrete_energy}
&\|p_h(t)\|_{L^2(\Omega)} + \|u_h(t)\|_{L^2(\Omega)}\\
& \le C \big (\|p_h(0)\|_{L^2(\Omega)} + \|u_h(0)\|_{L^2(\Omega)} 
+ \int_0^t \|f(s)\|_{L^2(\Omega)} + \|g(s)\|_{L^2(\Omega)} ds \big) \notag
\end{align}
with constant $C$ only depending on the bounds for $a$ and $b$ and the time horizon $T$.
\end{lemma}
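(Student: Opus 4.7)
The plan is to perform a standard energy argument by choosing the discrete solution itself as test functions and exploiting the anti-symmetric structure of the coupling terms.

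First, I would choose $q_h = p_h(t)$ in \eqref{eq:sysvd1} and $v_h = u_h(t)$ in \eqref{eq:sysvd2} and add the two identities. The crucial observation is that the coupling terms satisfy $(\div\, u_h(t), p_h(t))_\Omega - (p_h(t), \div\, u_h(t))_\Omega = 0$, i.e., they cancel exactly. What remains can be written as
\begin{align*}
\tfrac{1}{2} \tfrac{d}{dt}\bigl[(a p_h(t), p_h(t))_\Omega + (b u_h(t), u_h(t))_\Omega\bigr]
= (f(t), p_h(t))_\Omega + (g(t), u_h(t))_\Omega,
\end{align*}
where I used that $a$ and $b$ are time-independent.

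Next, I would introduce the weighted energy functional $\mathcal{E}(t)^2 := (a p_h(t), p_h(t))_\Omega + (b u_h(t), u_h(t))_\Omega$. Because $a$ and $b$ are bounded and uniformly positive, $\mathcal{E}(t)$ is equivalent to $\|p_h(t)\|_{L^2(\Omega)} + \|u_h(t)\|_{L^2(\Omega)}$ up to constants depending only on the bounds for $a$ and $b$. Cauchy-Schwarz applied to the right-hand side then yields
\begin{align*}
\tfrac{1}{2} \tfrac{d}{dt} \mathcal{E}(t)^2 \le C \bigl(\|f(t)\|_{L^2(\Omega)} + \|g(t)\|_{L^2(\Omega)}\bigr) \, \mathcal{E}(t).
\end{align*}
Formally dividing by $\mathcal{E}(t)$ gives $\tfrac{d}{dt}\mathcal{E}(t) \le C(\|f(t)\|_{L^2(\Omega)} + \|g(t)\|_{L^2(\Omega)})$, and integrating from $0$ to $t$ together with the equivalence between $\mathcal{E}$ and the $L^2$-norms yields \eqref{eq:discrete_energy}.

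The only technical subtlety is the division by $\mathcal{E}(t)$ at points where the energy vanishes. This is handled in the standard way by working with the regularized quantity $\mathcal{E}_\varepsilon(t) := (\mathcal{E}(t)^2 + \varepsilon^2)^{1/2}$ for $\varepsilon>0$, for which the same differential inequality holds, dividing by $\mathcal{E}_\varepsilon$, integrating, and passing to the limit $\varepsilon \to 0$. Alternatively one may apply Gronwall's lemma directly to $\mathcal{E}(t)^2$ after estimating $(f,p_h) + (g,u_h) \le \tfrac{1}{2}(\|f\|^2 + \|g\|^2) + C\,\mathcal{E}(t)^2$. No inverse inequalities or discrete properties beyond \eqref{eq:sysvd1}--\eqref{eq:sysvd2} are needed, so the constant in the final bound depends only on the bounds for $a$, $b$ and on $T$, as claimed.
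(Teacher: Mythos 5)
Your argument is correct and is essentially the paper's own proof: the paper likewise tests \eqref{eq:sysvd1}--\eqref{eq:sysvd2} with $q_h=p_h(t)$, $v_h=u_h(t)$, uses the cancellation of the coupling terms and Cauchy--Schwarz, and concludes by a standard energy/Gronwall argument (citing \cite[Theorem 4.1]{JenkinsRiviereWheeler02} for the details you have written out). Your handling of the division by $\mathcal{E}(t)$ via the regularization $\mathcal{E}_\varepsilon$ is a clean way to make that step rigorous.
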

\begin{proof}
The estimate follows by testing \eqref{eq:sysvd1}--\eqref{eq:sysvd2} 
with $q_h=p_h(t)$, $v_h=u_h(t)$ and application of Cauchy-Schwarz and G{\aa}rding inequalities; 
see e.g. \cite[Theorem 4.1]{JenkinsRiviereWheeler02}.
\end{proof}

\subsection{Error estimation}
We next turn to the derivation of a-priori error estimates. 
Using the projection operators defined above, we split the error in the usual way by
\begin{align} \label{eq:errorsplitting}
\|p(t)-p_h(t)\|_{L^2(\Omega)}  &+ \|u(t)-u_h(t)\|_{L^2(\Omega)} \\
&\le \|p(t)-\pi_h p(t)\|_{L^2(\Omega)} + \|u(t)-\rho_h u(t)\|_{L^2(\Omega)} \notag \\
& \qquad \qquad + \|\pi_h p(t)-p_h(t)\|_{L^2(\Omega)} + \|\rho_h u(t)-u_h(t)\|_{L^2(\Omega)} \notag
\end{align}
into a \emph{projection error} and a \emph{discrete error} component.
The projection error can be bounded readily by Lemma~\ref{lem:projections}. 
For the discrete error component, we use
\begin{lemma}[Discrete error]
Let $(p,u)$ be sufficiently smooth. Then
\begin{align} \label{eq:discrete_error}
&\|\pi_h p(t) - p_h(t)\|_{L^2(\Omega)} + \|\rho_h u(t) - u_h(t)\|_{L^2(\Omega)} \\
&\le C \int_0^t \|b\|_{L^\infty(\Omega)} \|\dt u(s) - \rho_h\dt u(s)\|_{L^2(\Omega)} + \|a-\pi_h^0 a\|_{L^\infty(\Omega)} \|\dt p(s) - \pi_h  \dt p(s)\|_{L^2(\Omega)} ds, \notag
\end{align}
with the same constant $C$ as in the discrete energy estimate \eqref{eq:discrete_energy} above.
\end{lemma}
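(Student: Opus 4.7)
The plan is to follow the standard mixed finite element error analysis, splitting off the projection parts via $\rho_h$ and $\pi_h$ and then carrying out an energy argument on the discrete error. Set $e_p(t) = \pi_h p(t) - p_h(t) \in Q_h$ and $e_u(t) = \rho_h u(t) - u_h(t) \in V_h$. Since $\pi_h$ commutes with $\partial_t$, the Galerkin orthogonality obtained by subtracting \eqref{eq:sysvd1}--\eqref{eq:sysvd2} from \eqref{eq:var1}--\eqref{eq:var2} can be rewritten, for every $q_h \in Q_h$ and $v_h \in V_h$, as
\begin{align*}
(a \dt e_p, q_h)_\Omega + (\div\, e_u, q_h)_\Omega &= (a(\dt p - \pi_h \dt p), q_h)_\Omega - (\div(u - \rho_h u), q_h)_\Omega, \\
(b \dt e_u, v_h)_\Omega - (e_p, \div\, v_h)_\Omega &= (b(\rho_h \dt u - \dt u), v_h)_\Omega - (p - \pi_h p, \div\, v_h)_\Omega.
\end{align*}
The commuting diagram property \eqref{eq:commuting} forces $(\div(u - \rho_h u), q_h)_\Omega = 0$, and the inclusion $\div V_h \subset Q_h$ from \eqref{eq:spaces} yields $(p - \pi_h p, \div v_h)_\Omega = 0$. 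These two identities are the structural reasons why the discrete error can be decoupled from the projection error in a clean way.

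The second key observation, which is what produces the sharp coefficient $\|a - \pi_h^0 a\|_{L^\infty}$ rather than $\|a\|_{L^\infty}$ in the pressure term, is the following: for any piecewise constant $\pi_h^0 a$ and any $q_h \in Q_h$, the product $(\pi_h^0 a)\, q_h$ still lies in $Q_h$, so by the defining property \eqref{eq:pQhK} of $\pi_h$,
\begin{align*}
(a(\dt p - \pi_h \dt p), q_h)_\Omega = ((a - \pi_h^0 a)(\dt p - \pi_h \dt p), q_h)_\Omega.
\end{align*}
This is the only nontrivial manipulation in the argument, and it is also the step I expect a reader to appreciate as the heart of the super-convergence mechanism; the remainder is bookkeeping.

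With these simplifications the error system reduces to
\begin{align*}
(a \dt e_p, q_h)_\Omega + (\div\, e_u, q_h)_\Omega &= ((a - \pi_h^0 a)(\dt p - \pi_h \dt p), q_h)_\Omega, \\
(b \dt e_u, v_h)_\Omega - (e_p, \div\, v_h)_\Omega &= -(b(\dt u - \rho_h \dt u), v_h)_\Omega.
\end{align*}
Testing with $q_h = e_p(t)$ and $v_h = e_u(t)$ and adding, the divergence terms cancel and the left-hand side equals $\tfrac{1}{2}\dt\bigl( (a e_p, e_p)_\Omega + (b e_u, e_u)_\Omega\bigr)$. Applying Cauchy-Schwarz on the right, using the uniform positivity of $a$ and $b$ to control $\|e_p\|_{L^2}, \|e_u\|_{L^2}$ by the weighted norms, and dividing by the square root of the energy in the standard way, one obtains
\begin{align*}
\dt \bigl( \|e_p\|_{L^2(\Omega)} + \|e_u\|_{L^2(\Omega)}\bigr) \le C\bigl(\|a-\pi_h^0 a\|_{L^\infty(\Omega)} \|\dt p - \pi_h \dt p\|_{L^2(\Omega)} + \|b\|_{L^\infty(\Omega)} \|\dt u - \rho_h \dt u\|_{L^2(\Omega)}\bigr).
\end{align*}
Finally, the chosen initial data $p_h(0)=\pi_h p_0$ and $u_h(0) = \rho_h u_0$ ensure $e_p(0) = e_u(0) = 0$, so integration from $0$ to $t$ yields the claimed bound with the same constant $C$ as in the discrete energy estimate. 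No additional regularity beyond what makes $\dt p$ and $\dt u$ projectable is required.
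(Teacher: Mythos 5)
Your proof is correct and follows essentially the same route as the paper: the same splitting into $e_p,e_u$, the same use of the commuting diagram property \eqref{eq:commuting} and of $\div V_h= Q_h$ to kill the divergence terms, and the same $\pi_h^0 a$ trick to sharpen the coefficient to $\|a-\pi_h^0 a\|_{L^\infty}$; the paper merely cites the discrete energy estimate \eqref{eq:discrete_energy} at the end where you write out the Gronwall-type energy argument explicitly. (There is an immaterial sign slip in the first right-hand-side term of your error system --- it should be $(a(\pi_h\dt p-\dt p),q_h)_\Omega$ --- but this does not affect the final bound.)
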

\noindent
Here $\pi_h^0: L^2(\Omega) \to P_0(\Th)$ denotes the $L^2$-orthogonal projection onto piecewise constants. 
\begin{proof}
Define $r_h(t) = \pi_h p(t) - p_h(t)$ and $w_h(t) = \rho_h u(t) - u_h(t)$. 
Using \eqref{eq:var1}--\eqref{eq:var2} and  \eqref{eq:sysvd1}--\eqref{eq:sysvd2}, 
we then see that $(r_h,w_h)$ satisfies 
$r_h(0)=0$ and $w_h(0)=0$, as well as 
\begin{alignat}{5}
(a \dt r_h(t),q_h)_\Omega+(\div\ w_h(t),q_h)_\Omega &= (f_h(t),q_h)_\Omega \label{eq:errh1}\\
(b \dt w_h(t),v_h)_\Omega-(r_h(t),\div\, v_h)_\Omega &= (g_h(t),v_h)_\Omega \label{eq:errh2}
\end{alignat}
for all $q_h \in Q_h$ and $v_h \in V_h$, and for all $0 \le t \le T$ with 
\begin{align*}
(f_h(t),q_h)_\Omega  &=(a (\dt \pi_h p(t) - \dt p(t)),q_h)_\Omega + (\div (\rho_h u(t) - u(t)),q_h)_\Omega  = :(i)+(ii)\\
(g_h(t),v_h)_\Omega  &=(b (\dt \rho_h u(t) - \dt u(t)),v_h)_\Omega - (\pi_h p(t) - p(t), \div\, v_h)_\Omega = :(iii)+(iv).
\end{align*}
Because of the compatibility condition \eqref{eq:spaces} and the commuting diagram property \eqref{eq:commuting}, the terms (ii) and (iv) vanish. 
The first term can be further expanded as 
\begin{align*}
(i) &= ( (a - \pi^0_h a) \; (\pi_h \dt p(t) - \dt p(t)), q_h)_\Omega + (\pi_h^0 a \; (\pi_h \dt p(t) - \dt p(t)),q_h)_\Omega.
\end{align*}
The last term in this expression again vanishes due to orthogonality of the projection $\pi_h$.
The discrete error $(r_h,w_h)$ thus satisfies a discrete variational problem 
with right hand sides $f_h(t)=(a-\pi_h^0 a) (\pi_h \dt p(t) - \dt p(t))$ and $g_h(t)=b (\rho_h \dt u(t) - \dt u(t))$ and zero initial conditions. The assertion now follows from the discrete energy estimate \eqref{eq:discrete_energy}.
\end{proof}

A combination of the previous arguments already yields our first main result.
\begin{theorem}[Error estimate for the semi-discretization] \label{theorem:semidiscrete} $ $\\
Assume that $a \in W^{1,\infty}(\Th)$ and that $(p,u)$ is sufficiently smooth. 
Then 
\begin{align*}
\|\pi_h p(t) - p_h(t)\|_{L^2(\Omega)}  + \|u(t) - u_h(t)\|_{L^2(\Omega)} 
\le C h^{k+2} C_1(p,u,t),
\end{align*}
with constant $C_1(p,u,t)=\|u(t)\|_{H^{k+2}(\Omega)} + \int_0^t \|\dt u(s)\|_{H^{k+2}(\Omega)} +\|\dt p(s)\|_{H^{k+1}(\Omega)} ds$. 
\end{theorem}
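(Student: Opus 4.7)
The plan is to assemble the bound directly from the two ingredients already prepared, namely the splitting \eqref{eq:errorsplitting} of the total error into projection and discrete parts, and the discrete-error estimate \eqref{eq:discrete_error}. The velocity error is the only contribution that still carries a ``raw'' projection piece; everything else is governed by the discrete error.

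First I would write
\begin{align*}
\|\pi_h p(t) - p_h(t)\|_{L^2(\Omega)} + \|u(t)-u_h(t)\|_{L^2(\Omega)}
&\le \|u(t)-\rho_h u(t)\|_{L^2(\Omega)} \\
&\qquad + \|\pi_h p(t)-p_h(t)\|_{L^2(\Omega)} + \|\rho_h u(t)-u_h(t)\|_{L^2(\Omega)},
\end{align*}
and bound the first summand by Lemma~\ref{lem:projections} with $r=k+2$, yielding $Ch^{k+2}\|u(t)\|_{H^{k+2}(\Omega)}$. For the remaining two summands I would invoke the discrete-error estimate \eqref{eq:discrete_error}.

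The key observation, and the only genuinely nontrivial step, is handling the two integrands in \eqref{eq:discrete_error}. The $\dt u$-integrand is immediate: Lemma~\ref{lem:projections} with $r=k+2$ gives $\|\dt u(s)-\rho_h \dt u(s)\|_{L^2(\Omega)}\le C h^{k+2}|\dt u(s)|_{H^{k+2}(\Th)}$, and $\|b\|_{L^\infty(\Omega)}$ is uniformly bounded by assumption. The $\dt p$-integrand only admits the estimate $\|\dt p(s)-\pi_h \dt p(s)\|_{L^2(\Omega)}\le C h^{k+1}|\dt p(s)|_{H^{k+1}(\Th)}$, which is one order short. The missing order is recovered from the factor $\|a-\pi_h^0 a\|_{L^\infty(\Omega)}$: since $a\in W^{1,\infty}(\Th)$, a standard Bramble--Hilbert argument on each element yields $\|a-\pi_h^0 a\|_{L^\infty(\Omega)}\le C h \|a\|_{W^{1,\infty}(\Th)}$. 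Multiplying the two $h$-powers gives the required $h^{k+2}$.

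Combining these bounds and absorbing $\|b\|_{L^\infty(\Omega)}$, $\|a\|_{W^{1,\infty}(\Th)}$ and the time horizon into the generic constant $C$ produces
\[
\|\pi_h p(t) - p_h(t)\|_{L^2(\Omega)} + \|u(t)-u_h(t)\|_{L^2(\Omega)}
\le C h^{k+2}\Bigl( \|u(t)\|_{H^{k+2}(\Omega)} + \int_0^t |\dt u(s)|_{H^{k+2}(\Th)} + |\dt p(s)|_{H^{k+1}(\Th)}\,ds\Bigr),
\]
which is the desired assertion. I expect no further obstacle; the decisive point is really that the compatibility \eqref{eq:spaces} plus the commuting diagram \eqref{eq:commuting} kill the ``div'' cross-terms in the derivation of \eqref{eq:discrete_error} (already done in its proof), so that the only surviving consistency error in the pressure equation is the piece $(a-\pi_h^0 a)(\pi_h \dt p-\dt p)$, which is precisely what carries the extra $h$.
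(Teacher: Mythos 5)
Your proposal is correct and follows exactly the route the paper intends: the error splitting \eqref{eq:errorsplitting}, the projection estimates of Lemma~\ref{lem:projections}, and the discrete-error bound \eqref{eq:discrete_error}, with the extra order for the pressure term recovered from $\|a-\pi_h^0 a\|_{L^\infty(\Omega)}\le Ch\|a\|_{W^{1,\infty}(\Th)}$ — precisely the role of the hypothesis $a\in W^{1,\infty}(\Th)$, as the paper's subsequent remark confirms. The paper leaves this assembly to the reader ("a combination of the previous arguments"), and you have filled it in correctly.
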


\begin{remark}
As can be seen from the proof, order optimal rates for less regular solutions can be obtained by simply replacing $k+2$ in all terms by an index $r\le k+2$. 
The assumption $a \in  W^{1,\infty}(\Th)$ for the parameter can be dropped for $r \le k+1$. 
\end{remark}

\subsection{Auxiliary results}
For the analysis of our post-processing strategy presented in the next section, we also require an error estimate for the time derivative of the velocity.
\begin{lemma}[Error estimates for the time derivatives] \label{lem:dtu} $ $\\
Let $(p,u)$ be sufficiently smooth. Then 
\begin{align*}
\|\dt u(t) - \dt u_h(t)\|_{L^2(\Omega)} 
\le C h^{k+1} C_2(p,u,t),
\end{align*}
with constant $C_2$ defined by $C_2(p,u,t) = \|\dt u(t)\|_{H^{k+1}(\Omega)} + \|\dt u(0)\|_{H^{k+1}(\Omega)} + \|\dt p(0)\|_{H^{k+1}(\Omega)}$ \\$+\int_0^t \|\dtt u(s)\|_{H^{k+1}(\Omega)} + \|\dtt p(s)\|_{H^{k+1}(\Omega)} ds$.
\end{lemma}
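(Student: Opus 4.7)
The plan is to mimic the analysis leading to Theorem~\ref{theorem:semidiscrete}, but applied to the time-differentiated system. First I would split the velocity-derivative error via the projection $\rho_h$:
\begin{align*}
\|\dt u(t) - \dt u_h(t)\|_{L^2(\Omega)}
\le \|\dt u(t) - \rho_h \dt u(t)\|_{L^2(\Omega)} + \|\dt w_h(t)\|_{L^2(\Omega)},
\end{align*}
where $w_h = \rho_h u - u_h$ as in the proof of the discrete error lemma and where I use that $\rho_h$ commutes with $\dt$. The first term is bounded by $C h^{k+1}\|\dt u(t)\|_{H^{k+1}(\Omega)}$ using the approximation property \eqref{eq:approx2}, contributing the first piece of $C_2$.

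For $\|\dt w_h(t)\|_{L^2(\Omega)}$, I would differentiate the discrete error equations \eqref{eq:errh1}--\eqref{eq:errh2} with respect to $t$. The pair $(\dt r_h, \dt w_h)$ then satisfies the same mixed system as $(r_h,w_h)$, but with right-hand sides $\dt f_h(t) = (a-\pi_h^0 a)(\pi_h \dtt p(t) - \dtt p(t))$ and $\dt g_h(t) = b(\rho_h\dtt u(t) - \dtt u(t))$ (exploiting that $a,b$ are time independent and that the vanishing terms (ii), (iv) in the proof of the discrete error lemma continue to vanish after differentiation). Applying the discrete energy estimate \eqref{eq:discrete_energy} to this differentiated system yields
\begin{align*}
\|\dt r_h(t)\|_{L^2(\Omega)} + \|\dt w_h(t)\|_{L^2(\Omega)}
\le C\Big(\|\dt r_h(0)\|_{L^2(\Omega)} + \|\dt w_h(0)\|_{L^2(\Omega)} + \int_0^t \|\dt f_h(s)\|_{L^2(\Omega)} + \|\dt g_h(s)\|_{L^2(\Omega)} \, ds\Big).
\end{align*}
The integral contributes the $\int_0^t \|\dtt p\|_{H^{k+1}(\Omega)} + \|\dtt u\|_{H^{k+1}(\Omega)}\,ds$ part of $C_2$ by the same approximation estimates as before, using just $|a - \pi_h^0 a| \le 2\|a\|_{L^\infty}$ (so $a\in W^{1,\infty}$ is not needed for a rate $h^{k+1}$).

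The main delicate point is bounding the non-zero initial values $\dt r_h(0)$ and $\dt w_h(0)$, which are determined implicitly by the error equations at $t=0$. Since $r_h(0) = 0$ and $w_h(0) = 0$, evaluating \eqref{eq:errh1}--\eqref{eq:errh2} at $t=0$ gives $(a\,\dt r_h(0),q_h)_\Omega = (f_h(0),q_h)_\Omega$ and $(b\,\dt w_h(0),v_h)_\Omega = (g_h(0),v_h)_\Omega$ for all $q_h \in Q_h$, $v_h \in V_h$. Testing with $q_h = \dt r_h(0)$ and $v_h = \dt w_h(0)$ and using uniform positivity of $a$ and $b$ yields $\|\dt r_h(0)\|_{L^2(\Omega)} \le C\|f_h(0)\|_{L^2(\Omega)} \le C h^{k+1}\|\dt p(0)\|_{H^{k+1}(\Omega)}$ and $\|\dt w_h(0)\|_{L^2(\Omega)} \le C\|g_h(0)\|_{L^2(\Omega)} \le C h^{k+1}\|\dt u(0)\|_{H^{k+1}(\Omega)}$, giving the remaining terms in $C_2$. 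Combining all estimates then yields the claim.
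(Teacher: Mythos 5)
Your proposal is correct and follows essentially the same route as the paper: the same splitting into projection and discrete error, the same differentiation of the error equations \eqref{eq:errh1}--\eqref{eq:errh2} with right-hand sides $(a-\pi_h^0 a)(\pi_h \dtt p - \dtt p)$ and $b(\rho_h \dtt u - \dtt u)$, and the same application of the discrete energy estimate. The only point where you diverge is the treatment of the non-zero initial data $\dt r_h(0)$, $\dt w_h(0)$: the paper goes back to the original discrete equations at $t=0$, uses $\div u_h(0)=\pi_h\div u_0$ from the commuting diagram property, and introduces an auxiliary $a$-weighted projection to compare $\dt p_h(0)$ with $\pi_h \dt p(0)$; you instead evaluate the error equations at $t=0$, exploit $r_h(0)=w_h(0)=0$ so that the divergence terms drop, and test with $\dt r_h(0)$ and $\dt w_h(0)$ directly, using only the uniform positivity of $a$ and $b$. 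Both arguments rest on the same underlying facts (exact projection of the initial data and the commuting diagram property, since $\div w_h(0)=0$ is precisely $\div u_h(0)=\div\rho_h u_0$), and yield the same $O(h^{k+1})$ bounds on the initial values; your variant is somewhat more streamlined, as it avoids the auxiliary weighted projection.
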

\begin{proof}
We again split the error into a projection and a discrete component
\begin{align*}
\|\dt u(t) - \dt u_h(t)\|_{L^2(\Omega)} \le  \|\dt u(t) - \rho_h \dt u(t)\|_{L^2(\Omega)} + \|\dt \rho_h u(t) - \dt u_h(t)\|_{L^2(\Omega)}.
\end{align*}
The projection error can be bounded by Lemma~\ref{lem:projections}. 
Now define $r_h(t) = \dt \pi_h p(t) - \dt p_h(t)$ and $w_h(t) = \dt \rho_h u(t) - \dt u_h(t)$. 
By formal differentiation of \eqref{eq:errh1}--\eqref{eq:errh2}, one can see that $(r_h,w_h)$
again solves a discrete variational problem of the form \eqref{eq:errh1}--\eqref{eq:errh2}
with $f_h(t)=(a-\pi_h^0 a) (\pi_h \dtt p(t) - \dtt p(t))$ and $g_h(t)=b(\rho_h \dtt u(t) - \dtt u(t))$, 
and initial conditions 
\begin{align*}
r_h(0) = \pi_h \dt p(0) - \dt p_h(0) 
\qquad \text{and} \qquad 
w_h(0) = \rho_h \dt u(0) - \dt u_h(0).
\end{align*}
To take advantage of the discrete energy estimate, we have to derive bounds for these initial values. 
We start with an auxiliary result: 
Given $\hat q_h \in Q_h$, we define $q_h \in Q_h$ by 
\begin{align*}
(a q_h, r_h)_\Omega &= (\hat q_h,r_h)_\Omega \qquad \forall r_h \in Q_h. 
\end{align*}
Since $a$ is bounded from above and uniformly positive, this elliptic problem is 
uniquely solvable and we have $c_1 \|\hat q_h\|_K \le \|q_h\|_K \le c_2 \|\hat q_h\|_K$.
Using this construction and the discrete variational problem, 
we can see that for any $\hat q_h \in Q_h$ there holds
\begin{align*}
(\dt p_h(0), \hat q_h)_\Omega 
&=(a \dt p_h(0), q_h)_\Omega 
 =(f(0) - \div\, u_h(0), q_h)_\Omega \\
&=(f(0) - \div\, u(0),q_h)_\Omega 
 =(a \dt p(0),q_h) \\
&=(\pi_h \dt p(0), \hat q_h)_\Omega + (a (\dt p(0) - \pi_h \dt p(0)), q_h)_\Omega.
\end{align*}
Thus $\dt p_h(0) = \pi_h \dt p(0) + e^p_h$
with $(e^p_h,\hat q_h)_\Omega = (a (\dt p(0) - \pi_h \dt p(0)), q_h)_\Omega$.
Using the relation between $q_h$ and $\hat q_h$, this allows us to estimate 
\begin{align*}
\|\dt p_h(0) - \pi_h \dt p(0)\|_{L^2(\Omega)}
&\le c_2 \|a\|_{L^\infty(\Omega)} \|\dt p(0) - \pi_h \dt p(0)\|_{L^2(\Omega)}.
\end{align*}
In a similar manner, one can also show that 
\begin{align*}
\|\dt u_h(0) - \rho_h \dt u(0)\|_{L^2(\Omega)}
&\le c_3 \|b\|_{L^\infty(\Omega)} \|\dt u(0) - \rho_h \dt u(0)\|_{L^2(\Omega)}. 
\end{align*}
The assertion of the theorem now follows by 
using these estimates for the error in the initial condition, the discrete energy estimate, and the estimates for the projection errors.
\end{proof}

\begin{remark}
Similar as before, one can also formulate error estimates for orders $r \le k+2$ under appropriate smoothness assumptions on the solution. 
We will however only use the estimate for $r=k+1$ explicitly below.
\end{remark}

\section{Post-processing for the semi-discretization} \label{sec:post}

\subsection{Post-processing strategy}

The approximation properties of the finite element spaces $V_h$ and $Q_h$ for velocity and pressure are somewhat unbalanced. 
Due to the super-convergence of the projected pressure error, it is however 
possible to improve the pressure approximation by one order through local post-processing.
The starting point for our considerations is the following observation: 
Let us multiply equation \eqref{eq:sysm2} with some function $v=\nabla q$ and integrate over an element $K \in \Th$.
Then 
\begin{align}
(\grad\p,\grad q)_K = -(b \dt\u,\grad q)_K + (g,\grad q)_K.
\end{align}
The pressure $p$ can thus be found, up to an additive constant, by solution of an elliptic boundary value problem, once the velocity $u$ is known. 
This motivates 
\begin{problem}[Local post-processing strategy] $ $\\
For every $0 \le t \le T$, find $\widetilde p_h(t)\in \P_{k+1}(\Th)$ such that for all $K \in \Th$
\begin{alignat}{2}
(\grad\widetilde\p_h(t),\grad \widetilde q_h)_K &=(g(t),\grad \widetilde q_h)_K -(b \dt\u_h(t),\grad \widetilde q_h)_K \quad 
&&\forall \widetilde q_h\in \P_{k+1}(K), \label{eq:syspp1} \\ 
(\widetilde p_h(t), q_h^0)_K &= (p_h(t),q_h^0)_K &&\forall q_h^0 \in \P_0(K). \label{eq:syspp2}
\end{alignat}
\end{problem}
\noindent
The second condition could be written as $\pi_h^0 \widetilde p_h(t) = \pi_h^0 p_h(t)$ and fixes the constant.
\begin{remark}
The above procedure is inspired by the post-processing strategy 
for mixed finite element approximations of the Poisson problem presented in \cite{Stenberg91}. 
Let us emphasize that $\widetilde p_h(t)$ can be computed locally and independently for every element $K \in \Th$. 
 \end{remark}

\subsection{Error estimates}

Proceeding with similar arguments as in \cite{Stenberg91}, we now obtain

\begin{theorem}[Error estimate for the post-processing]\label{theorem:semidiscrete-pp} $ $\\
Under the assumptions of Theorem~\ref{theorem:semidiscrete}, we have 
\begin{align*}
\|p(t)-\widetilde p_h(t)\|_{L^2(\Omega)}
\leq C h^{k+2} \big( 
\|p(t)\|_{H^{k+2}(\Omega)} 
+ C_1(p,u,t) + C_2(p,u,t) \big),
\end{align*}
with constants $C_1(p,u,t)$ and $C_2(p,u,t)$ defined as in Theorem~\ref{theorem:semidiscrete} and Lemma~\ref{lem:dtu}. 
\end{theorem}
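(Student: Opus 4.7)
The approach mirrors Stenberg's post-processing analysis for the elliptic problem \cite{Stenberg91}. I would introduce an auxiliary elementwise projection $\widetilde\pi_h p \in \P_{k+1}(\Th)$ characterised, on each $K \in \Th$, by
\begin{align*}
(\grad \widetilde\pi_h p, \grad q)_K &= (\grad p, \grad q)_K \qquad \forall q \in \P_{k+1}(K), \\
\pi_h^0 \widetilde\pi_h p &= \pi_h^0 p.
\end{align*}
Coercivity of the bilinear form on the quotient $\P_{k+1}(K)/\R$ yields well-posedness, and a standard Bramble--Hilbert argument combined with an element-wise Poincar\'e inequality (applicable because $p - \widetilde\pi_h p$ has zero mean on $K$) delivers $\|p - \widetilde\pi_h p\|_{L^2(\Omega)} \le C h^{k+2} \|p\|_{H^{k+2}(\Omega)}$. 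Splitting $p - \widetilde p_h = (p - \widetilde\pi_h p) + \eta_h$ with $\eta_h := \widetilde\pi_h p - \widetilde p_h$, it remains to estimate $\eta_h$.

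For the discrete remainder, I would substitute the momentum equation \eqref{eq:sysm2}, which in the form $\grad p = g - b\dt u$ rewrites the defining identity of $\widetilde\pi_h p$ as $(\grad \widetilde\pi_h p, \grad q)_K = (g - b\dt u, \grad q)_K$. Subtracting \eqref{eq:syspp1} element-wise gives
\begin{align*}
(\grad \eta_h, \grad q_h)_K = (b(\dt u_h - \dt u), \grad q_h)_K \qquad \forall q_h \in \P_{k+1}(K),
\end{align*}
and \eqref{eq:syspp2} together with the mean condition on $\widetilde\pi_h p$ yields $\pi_h^0 \eta_h = \pi_h^0 (p - p_h) = \pi_h^0(\pi_h p - p_h)$. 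Testing with $q_h = \eta_h - \pi_h^0 \eta_h$ and applying Cauchy--Schwarz produces $\|\grad \eta_h\|_{L^2(K)} \le C \|b\|_{L^\infty(\Omega)} \|\dt u - \dt u_h\|_{L^2(K)}$.

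An element-wise Poincar\'e inequality then gives $\|\eta_h\|_{L^2(K)} \le C h_K \|\grad \eta_h\|_{L^2(K)} + \|\pi_h^0 \eta_h\|_{L^2(K)}$. Squaring, summing over $K$, and invoking shape regularity yields
\begin{align*}
\|\eta_h\|_{L^2(\Omega)} \le C h \|\dt u - \dt u_h\|_{L^2(\Omega)} + \|\pi_h p - p_h\|_{L^2(\Omega)}.
\end{align*}
The second norm on the right is controlled by Theorem~\ref{theorem:semidiscrete} at order $C h^{k+2} C_1(p,u,t)$, while Lemma~\ref{lem:dtu} gives $\|\dt u - \dt u_h\|_{L^2(\Omega)} \le C h^{k+1} C_2(p,u,t)$; multiplication by $h$ lifts the first term to the same order $h^{k+2}$. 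Combining with the projection bound for $p - \widetilde\pi_h p$ then finishes the proof.

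The principal obstacle is not any single technical estimate but the upfront design of the projection $\widetilde\pi_h p$: its gradient condition must exactly shadow the local elliptic problem defining $\widetilde p_h$ once \eqref{eq:sysm2} is substituted, and its mean must align with that of $p_h$, so that all problem data ($g$ and $p$ itself) cancel in the residual equation for $\eta_h$ and only the semi-discrete velocity-time-derivative error $\dt u - \dt u_h$ and the projected pressure error survive. Once this alignment is secured, the extra order of convergence emerges naturally from the factor $h_K$ in the local Poincar\'e inequality combined with the sharp $h^{k+1}$ bound of Lemma~\ref{lem:dtu}.
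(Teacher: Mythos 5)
Your proposal is correct and follows essentially the same route as the paper: a Stenberg-type splitting into a local projection error, a piecewise-constant part controlled by $\|\pi_h p-p_h\|_{L^2(\Omega)}$ via \eqref{eq:syspp2}, and a mean-zero part handled with the element-wise Poincar\'e inequality and the gradient identity obtained by combining \eqref{eq:sysm2} with \eqref{eq:syspp1}, concluded by invoking Theorem~\ref{theorem:semidiscrete} and Lemma~\ref{lem:dtu}. The only (harmless) difference is the choice of intermediate projection: you take the local $H^1$-seminorm projection with a mean constraint, so that Galerkin orthogonality removes the term $\|\grad(\widetilde \pi_h p-p)\|_{L^2(K)}$ from the residual equation, whereas the paper uses the $L^2$-projection onto $\P_{k+1}(\Th)$ and bounds that extra gradient term separately by $C h_K^{k+1}\|p\|_{H^{k+2}(K)}$; both yield the same final estimate.
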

\noindent
Recall that by convention the constant $C$ is assumed independent of $h$, $u$, $p$, and $t$.
\begin{proof}
Let $\widetilde \pi_h: L^2(\Omega) \to \P_{k+1}(\Th)$ denote the $L^2$-orthogonal projection onto $\P_{k+1}(\Th)$. 
For ease of notation, we write $p=p(t)$ and omit the time argument in the following.
We can then split the error at time $t$ and on every element $K$ into
\begin{align} \label{eq:split}
\|p-\widetilde p_h\|_{L^2(K)} 
\leq\|p&-\widetilde \pi_h p\|_{L^2(K)} + \|\pi_h^0 (\widetilde \pi_h p- \widetilde p_h)\|_{L^2(K)} \notag \\
&+\|(\text{id}-\pi_h^0)(\widetilde \pi_h p-\widetilde p_h)\|_{L^2(K)}.
\end{align}
The first term is already of optimal order, and the second can be estimated as
\begin{align*}
\|\pi_h^0 (\widetilde \pi_h p - \widetilde p_h)\|_{L^2(K)} 
= \|\pi_h^0 (\pi_h p - p_h)\|_{L^2(K)}
\leq \|\pi_h p-p_h\|_{L^2(K)}.
\end{align*}
Here we used the orthogonality of the $L^2$-projections and \eqref{eq:syspp2} in the first step.
After summing over all elements, the second term in the error splitting can thus be bounded by the estimates of Theorem~\ref{theorem:semidiscrete}.
Now consider the third term $\widetilde q_h = (\text{id}-\pi_h^0)(\widetilde \pi_h p-\widetilde p_h)$. 
By the Poincaré-Wirtinger inequality for the convex domain $K$, we have
\begin{align*}
\|\widetilde q_h\|_{L^2(K)}\leq h_K \|\grad \widetilde q_h\|_{L^2(K)}.
\end{align*}
The gradient of $\widetilde q_h$ can be further estimated by 
\begin{align*}
\|\grad \widetilde q_h\|_{L^2(K)}^2
&=(\grad( \widetilde \pi_h p-\widetilde p_h),\grad \widetilde q_h)_K\\
&\leq(\grad(\widetilde \pi_h p-p),\grad \widetilde q_h)_K+(\grad(p-\widetilde p_h),\grad \widetilde q_h)_K\\
&=(\grad(\widetilde \pi_h p-p),\grad \widetilde q_h)_K-(b \dt(u-u_h),\grad \widetilde q_h)_K\\
&\leq \left(\|\grad(\widetilde \pi_h p-p)\|_{L^2(K)} + c \|\dt(u-u_h)\|_{L^2(K)}\right)\|\grad \widetilde q_h\|_{L^2(K)}.
\end{align*}
The third term in the error splitting can thus be bounded by 
\begin{align*}
\|(\text{id}-\pi_h^0)(\widetilde \pi_h p-\widetilde p_h)\|_{L^2(K)}
&\le h_K \|\grad(\widetilde \pi_h p-p)\|_{L^2(K)} + C h_K \|\dt(u-u_h)\|_{L^2(K)}.
\end{align*}
The assertion now follows by summing over all elements, 
using the estimates of Theorem~\ref{theorem:semidiscrete} and Lemma~\ref{lem:dtu}, 
and noting that $\|\nabla (\widetilde \pi_h p - p)\|_{L^2(K)} \le C h_K^{k+1} \|p\|_{H^{k+2}(K)}$.
\end{proof}

\begin{remark}
A different post-processing strategy was investigated in \cite{Chen99},
under the assumption that $\Th$ results from a uniform refinement of a macro-mesh $\T_H$.
An improved approximation $\widetilde p_H \in \P_{k+1}(\T_H)$ was obtained there for polynomial degree $k=0$ by local averaging of the finite element solution $p_h$ on macro elements $\widetilde K \in \T_H$.
This procedure may seem simpler at first sight, but it yields approximations $\widetilde p_H$ 
on a coarser mesh and additionally requires the availability of an appropriate macro mesh.
The post-processing strategy presented above does not rely on such restrictive assumptions.
\end{remark}

\section{Time discretization} \label{sec:full}

\subsection{The fully discrete scheme}
We now consider the time discretization of the mixed finite element scheme. For ease of presentation, we only consider the Crank-Nicolson method here. Similar results could however also be obtained for other methods.

Let us fix some $N \ge 1$ and set $\tau = T/N$. Then define $t^n = n \tau$ for $0 \le n \le N$. 
Given a sequence $\{p^n\}_{n \ge 0}$, we denote by
\begin{align}
\dtau p^{n-1/2} = \frac{p^{n}-p^{n-1}}{\tau} 
\qquad \text{and} \qquad  
p^{n-1/2} = \frac{p^{n}+p^{n-1}}{2}
\end{align}
the differences and averages at intermediate time steps. 
For an arbitrary function $p$ we first set $p^{n}=p(t^n)$ and 
then define the above symbols accordingly for the sequence $\{p^n\}_{n \ge 0}$.
As a fully discrete approximation to our problem, we now consider
\begin{problem}[Fully discrete problem]\label{problem:fullydiscrete} $ $\\
Set $p_h^0=\pi_h p_0$ and $u_h^0=\rho_h u_0$.
Then for $1 \le n \le N$  find $(p_h^n,u_h^n) \in Q_h \times V_h$ such that 
\begin{alignat}{2}
(a \dtau p_h^{n-1/2},q_h)_\Omega+(\div\, u_h^{n-1/2} ,q_h)_\Omega &=(f^{n-1/2},q_h)_\Omega \qquad &&\forall q_h\in Q_h, \label{eq:syscvfd1} \\ 
(b \dtau u_h^{n-1/2},v_h)_\Omega - (p_h^{n-1/2},\div\, v_h)_\Omega &=(g^{n-1/2},v_h)_\Omega &&\forall v_h\in V_h. \label{eq:syscvfd2}
\end{alignat}
\end{problem}
Since the spaces $Q_h$, $V_h$ are finite dimensional, it is not difficult to see that this iteration is well-defined. 
Moreover, the following stability estimates hold true.
\begin{lemma}[Energy estimate for the fully discrete scheme] \label{lem:fullydiscrete}$ $\\
Let $(p_h^n,u_h^n)$ for $n \ge 0$, be defined as above. Then
\begin{align*}
\|p_h^n\|_{L^2(\Omega)} &+ \|u_h^n\|_{L^2(\Omega)} \\
&\le C \big( \|p_h^0\|_{L^2(\Omega)} + \|u_h^0\|_{L^2(\Omega)} 
   + \sum\nolimits_{k=1}^{n} \tau ( \|f^{k-1/2}\|_{L^2(\Omega)} + \|g^{k-1/2}\|_{L^2(\Omega)} ) \big).
\end{align*}
\end{lemma}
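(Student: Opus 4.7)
The plan is to mimic the energy argument used for the semi-discrete case, taking proper account of the Crank--Nicolson averaging. First I would test \eqref{eq:syscvfd1} with $q_h = p_h^{n-1/2}$ and \eqref{eq:syscvfd2} with $v_h = u_h^{n-1/2}$. Since $p_h^{n-1/2} \in Q_h$ and $u_h^{n-1/2} \in V_h$, adding the two identities causes the mixed divergence terms to cancel exactly, leaving
\begin{align*}
(a \dtau p_h^{n-1/2}, p_h^{n-1/2})_\Omega + (b \dtau u_h^{n-1/2}, u_h^{n-1/2})_\Omega = (f^{n-1/2}, p_h^{n-1/2})_\Omega + (g^{n-1/2}, u_h^{n-1/2})_\Omega.
\end{align*}

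The key algebraic identity underlying the Crank--Nicolson method is
\begin{align*}
(a \dtau q^{n-1/2}, q^{n-1/2})_\Omega = \frac{1}{2\tau}\bigl(\|a^{1/2} q^n\|_{L^2(\Omega)}^2 - \|a^{1/2} q^{n-1}\|_{L^2(\Omega)}^2\bigr),
\end{align*}
and analogously with $b$ replacing $a$. Introducing the weighted discrete energy $E^k := \|a^{1/2} p_h^k\|_{L^2(\Omega)}^2 + \|b^{1/2} u_h^k\|_{L^2(\Omega)}^2$, multiplying the previous identity by $2\tau$, and summing from $k=1$ up to $n$ telescopes the left-hand side to $E^n - E^0$.

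To close the estimate without invoking a discrete Gronwall inequality, I would use a maximum-over-time-steps argument. Set $s^k := \|p_h^k\|_{L^2(\Omega)} + \|u_h^k\|_{L^2(\Omega)}$ and pick $M \in \{0, \ldots, n\}$ attaining $s^M = \max_{0 \le k \le n} s^k$. Since $a$ and $b$ are uniformly positive and bounded, $E^k$ and $(s^k)^2$ are two-sided equivalent with constants depending only on $a, b$. Applying Cauchy--Schwarz to the data terms and bounding $\|p_h^{k-1/2}\|_{L^2(\Omega)} + \|u_h^{k-1/2}\|_{L^2(\Omega)} \le s^M$ uniformly in $k$ gives
\begin{align*}
(s^M)^2 \le C_1 E^0 + C_1 s^M \sum_{k=1}^n \tau \bigl(\|f^{k-1/2}\|_{L^2(\Omega)} + \|g^{k-1/2}\|_{L^2(\Omega)}\bigr).
\end{align*}
Dividing by $s^M$ (the case $s^M = 0$ being trivial) and using $s^n \le s^M$ yields the claim.

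The main subtlety is producing a constant independent of $\tau$ and of $n$ even though the data pairings on the right-hand side involve the unknown at every intermediate step. The maximum-over-time-steps device handles this cleanly by exploiting the linear (rather than quadratic) dependence of the right-hand side on $(p_h^{k-1/2}, u_h^{k-1/2})$; alternatively one could use Young's inequality followed by a discrete Gronwall lemma, at the cost of an additional $T$-dependent factor, but the argument above matches the stated bound directly.
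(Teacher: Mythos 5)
Your proof is correct. The opening steps --- testing \eqref{eq:syscvfd1}--\eqref{eq:syscvfd2} with $q_h=p_h^{n-1/2}$, $v_h=u_h^{n-1/2}$, cancelling the divergence terms, and telescoping the weighted energy via the Crank--Nicolson identity --- are exactly what the paper does. Where you diverge is in how the data pairing is absorbed: the paper closes with a discrete Gronwall lemma (after Young's inequality, which produces quadratic terms in the unknown on the right), whereas you exploit the fact that the right-hand side is only \emph{linear} in $(p_h^{k-1/2},u_h^{k-1/2})$ and use the maximum-over-time-steps device. Your route is slightly cleaner and yields a constant depending only on the bounds for $a$ and $b$, with no $T$-dependent Gronwall factor; the paper's route is shorter to state and is the same mechanism already cited for the semi-discrete energy estimate. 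One small point worth writing out explicitly: after arriving at $(s^M)^2 \le C_1 E^0 + C_1 s^M D$, the division by $s^M$ needs $E^0 \le C\, s^0 s^M$, which holds precisely because $0$ lies in the range over which $M$ maximizes, so $s^0 \le s^M$; as stated, "dividing by $s^M$" leaves the term $E^0/s^M$ unaccounted for. This is a routine fix, not a gap.
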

\begin{proof}
The assertion follows by testing the discrete problem \eqref{eq:syscvfd1}--\eqref{eq:syscvfd2} with $q_h=p_h^{n-1/2}$ and $v_h=u_h^{n-1/2}$, summing over all time-steps, and a discrete Gronwall lemma.
\end{proof}

\subsection{Error analysis}
With similar arguments as for the semi-discretization, we obtain 
\begin{theorem}[Error estimates for the full discretization] \label{theorem:fullydiscrete} $ $\\
Assume that $a \in W^{1,\infty}(\Th)$ and that $(p,u)$ is sufficiently smooth. 
Then 
\begin{align*}
\|\pi_h p(t^n) - p_h^n\|_{L^2(\Omega)} + \|u(t^n) - u_h^n\|_{L^2(\Omega)} 
\le C \big (h^{k+2} C_1(p,u,t^n) + \tau^2 C_3(p,u,t^n) \big)
\end{align*}
with $C_1(p,u,t)$ as in Theorem~\ref{theorem:semidiscrete} and $C_3(p,u,t) = \int_0^t \|\dttt p(s)\|_{L^2(\Omega)} + \|\dttt u(s)\|_{L^2(\Omega)} ds$.
\end{theorem}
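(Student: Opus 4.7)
The plan is to mirror the semi-discrete analysis of Theorem~\ref{theorem:semidiscrete}: introduce the discrete errors $r_h^n := \pi_h p(t^n) - p_h^n$ and $w_h^n := \rho_h u(t^n) - u_h^n$, dispose of the projection errors $p(t^n)-\pi_h p(t^n)$ and $u(t^n)-\rho_h u(t^n)$ directly via Lemma~\ref{lem:projections} (they contribute $O(h^{k+2}C_1)$), and focus the remaining work on the discrete pair. Since $p_h^0 = \pi_h p_0$ and $u_h^0 = \rho_h u_0$, one has $r_h^0 = w_h^0 = 0$, so the initial data contributes nothing when the fully discrete energy estimate Lemma~\ref{lem:fullydiscrete} is applied at the end.

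The first step is to derive a discrete error equation. Evaluating the variational identities \eqref{eq:var1}--\eqref{eq:var2} at $t^{n-1}$ and $t^n$, averaging, subtracting the Crank-Nicolson scheme \eqref{eq:syscvfd1}--\eqref{eq:syscvfd2}, and then using the commuting diagram $\div\,\rho_h u = \pi_h \div\, u$ and the orthogonality of $\pi_h$ exactly as in the proof of Lemma~\ref{lem:dtu}, the pair $(r_h^n,w_h^n)$ satisfies
\begin{align*}
(a\,\dtau r_h^{n-1/2},q_h)_\Omega + (\div\, w_h^{n-1/2},q_h)_\Omega &= (f_h^{n-1/2},q_h)_\Omega,\\
(b\,\dtau w_h^{n-1/2},v_h)_\Omega - (r_h^{n-1/2},\div\, v_h)_\Omega &= (g_h^{n-1/2},v_h)_\Omega,
\end{align*}
with
\begin{align*}
f_h^{n-1/2} &= -(a-\pi_h^0 a)\bigl(\dtau p^{n-1/2} - \pi_h \dtau p^{n-1/2}\bigr) - a\,\sigma_p^{n-1/2},\\
g_h^{n-1/2} &= -b\bigl(\dtau u^{n-1/2} - \rho_h \dtau u^{n-1/2}\bigr) - b\,\sigma_u^{n-1/2},
\end{align*}
where $\sigma_p^{n-1/2} := \tfrac12(\dt p(t^{n-1})+\dt p(t^n)) - \dtau p^{n-1/2}$, and $\sigma_u^{n-1/2}$ is defined analogously.

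Next I would bound the spatial and temporal pieces separately. For the \emph{spatial} contribution, the inequality $\|a-\pi_h^0 a\|_{L^\infty(\Omega)}\le C h\|a\|_{W^{1,\infty}(\Th)}$ and Lemma~\ref{lem:projections}, together with the identity $\dtau\phi^{n-1/2} = \tau^{-1}\!\int_{t^{n-1}}^{t^n}\dt\phi(s)\,ds$ (applied with $\phi=p$ and $\phi=u$), yield terms that, after multiplication by $\tau$ and summation over $k=1,\dots,n$, telescope into $h^{k+2}\!\int_0^{t^n}\|\dt p(s)\|_{H^{k+1}(\Omega)} + \|\dt u(s)\|_{H^{k+2}(\Omega)}\,ds$, reproducing the $h^{k+2}C_1(p,u,t^n)$ contribution. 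For the \emph{temporal} residual, I would use the Peano-kernel form of the trapezoidal rule,
\begin{align*}
\sigma_p^{n-1/2} = \frac{1}{2\tau}\!\int_{t^{n-1}}^{t^n} (s-t^{n-1})(t^n-s)\,\dttt p(s)\,ds,
\end{align*}
together with $|(s-t^{n-1})(t^n-s)|\le \tau^2/4$, to obtain $\|\sigma_p^{n-1/2}\|_{L^2(\Omega)}\le C\tau\!\int_{t^{n-1}}^{t^n}\|\dttt p(s)\|_{L^2(\Omega)}\,ds$, with the analogous estimate for $\sigma_u^{n-1/2}$. Summing over $n$ yields the $\tau^2 C_3(p,u,t^n)$ contribution, and combining everything with Lemma~\ref{lem:fullydiscrete} finishes the proof.

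The main technical point is packaging the temporal consistency residual so that the bound involves exactly the third-order derivatives integrated in time on $[0,t^n]$ that appear in $C_3$. A brute-force split into $[\tfrac12(\dt p(t^{n-1})+\dt p(t^n))-\dt p(t^{n-1/2})]+[\dt p(t^{n-1/2})-\dtau p^{n-1/2}]$ handled by separate Taylor expansions also works, but the combined Peano-kernel representation above is shorter, requires no regularity beyond $\dttt p\in L^1(0,T;L^2(\Omega))$, and leads directly to the $L^1$-in-time bound that telescopes cleanly into the constant $C_3$. All remaining manipulations are direct transcriptions of the semi-discrete analysis to the time-stepping setting.
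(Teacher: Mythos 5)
Your proposal is correct and follows essentially the same route as the paper's proof: the same splitting into projection and discrete errors, the same discrete error equation whose right-hand sides reduce (via the commuting diagram property and $L^2$-orthogonality) to a spatial term controlled by $\|a-\pi_h^0a\|_{L^\infty}$ and the projection estimates and a temporal consistency term of size $O(\tau^2)$, concluded with the discrete energy estimate of Lemma~\ref{lem:fullydiscrete}. The only cosmetic difference is that you write the Crank--Nicolson consistency error via an explicit Peano-kernel representation where the paper simply invokes Taylor expansion; both give the identical bound $\tau\int_{t^{n-1}}^{t^n}\|\dttt z(s)\|_{L^2(\Omega)}\,ds$ and hence the same constant $C_3$.
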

\begin{proof}
For ease of notation, we write $p^n=p(t^n)$ and $u^n=u(t^n)$ in the following. 
Similar as for the semi-discrete problem, 
we again use an error splitting 
\begin{align*} 
\|p^n-p_h^n\|_{L^2(\Omega)}  &+ \|u^n-u_h^n\|_{L^2(\Omega)} \\
&\le \|p^n-\pi_h p^n\|_{L^2(\Omega)} + \|u^n-\rho_h u^n\|_{L^2(\Omega)} \notag \\
& \qquad \qquad + \|\pi_h p^n-p_h^n\|_{L^2(\Omega)} + \|\rho_h u^n-u_h^n\|_{L^2(\Omega)} \notag
\end{align*}
into a projection and a discrete error component.
The projection error can be estimated by Lemma~\ref{lem:projections}.
By construction, the discrete errors $r_h^n = \pi_h p(t^{n}) - \p_h^n$ and $w_h^n = \rho_h u(t^n) - u_h^n$ now satisfy $r_h(0)=0$ and $w_h(0)=0$, and 
they solve a fully discrete system of the form \eqref{eq:syscvfd1}--\eqref{eq:syscvfd2} 
with right hand sides
\begin{align*}
f_h^{n-1/2} &= (a-\pi_h^0 a) (\dtau \pi_h p^{n-1/2} - \dtau p^{n-1/2}) +  a (\dtau p^{n-1/2}-\dt p^{n-1/2}) = :(i)+(ii), \\
g_h^{n-1/2} &=  b (\dtau \rho_h u^{n-1/2} - \dtau u^{n-1/2}) + b (\dtau u^{n-1/2} - \dt u^{n-1/2}) =:(i')+(ii').
\end{align*}
The two terms (i) and (i') can be estimated similarly as above by 
\begin{align*}
\|(a-\pi_h^0 a) (\dtau \pi_h p^{n-1/2} - \dtau p^{n-1/2})\|_{L^2(\Omega)}
&\le  \tau^{-1} C h^{k+2} \|a\|_{W^{1,\infty}(\Omega)} \int_{t^{n-1}}^{t^n} \|\dt p(s)\|_{H^{k+1}(\Omega)} ds
\end{align*}
and
\begin{align*}
\|\dtau \rho_h u^{n-1/2} - \dtau u^{n-1/2}\|_{L^2(\Omega)} 
&\le \tau^{-1} C h^{k+2} \int_{t^{n-1}}^{t^n} \|\dt u(s)\|_{H^{k+2}(\Omega)} ds.
\end{align*}
For the remaining terms (ii) and (ii'), we use Taylor expansions to estimate the differences
\begin{align*}
|\dtau z^{n-1/2} - \dt z^{n-1/2}|
&\le \tau \int_{t^{n-1}}^{t^n} |\dttt z(s)| ds.
\end{align*}
Summing up, this yields the following bounds for the right hand sides
\begin{align*}
\tau \|f_h^{n-1/2}\|_{L^2(\Omega)} &\le C \int_{t^{n-1}}^{t^n} h^{k+2} \|\dt p(s)\|_{H^{k+1}\Omega)} + \tau^2  \|\dttt p(s)\|_{L^2(\Omega)} ds,\\
\tau \|g_h^{n-1/2}\|_{L^2(\Omega)} &\le C \int_{t^{n-1}}^{t^n} h^{k+2} \|\dt u(s)\|_{H^{k+2}(\Omega)} + \tau^2  \|\dttt u(s)\|_{L^2(\Omega)} ds.
\end{align*}
Using the discrete energy estimate of Lemma~\ref{lem:fullydiscrete} and the projection error estimates of Lemma~\ref{lem:projections}, we now obtain the remaining bounds for the the discrete error component. 
\end{proof}

\subsection{Auxiliary estimates}

By careful inspection of the previous proof, one can  
obtain the following bound for the error after the first step, 
which will be needed below. 
\begin{lemma} \label{lem:firststep}
Let the assumptions of the previous theorem be valid. Then 
\begin{align*}
&\|\pi_h p^1 - p_h^1\|_{L^2(\Omega)} + \|\rho_h u^1 - u_h^1\|_{L^2(\Omega)} \\
&\qquad \qquad \le C \tau \max_{0 \le s \le \tau} \big( h^{k+2} \|\dt p(s)\|_{H^{k+1}\Omega)} + \tau^2  \|\dttt p(s)\|_{L^2(\Omega)} \\
&\qquad \qquad \qquad \qquad \qquad + h^{k+2} \|\dt u(s)\|_{H^{k+2}(\Omega)} + \tau^2  \|\dttt u(s)\|_{L^2(\Omega)}\big).
\end{align*}
\end{lemma}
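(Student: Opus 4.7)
The plan is to rerun the error analysis from the proof of Theorem~\ref{theorem:fullydiscrete} in the special case $n=1$, exploiting the fact that the initial discrete errors vanish so the discrete Gr\"onwall iteration collapses to a single-step energy bound.

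First, I would recall that $r_h^n = \pi_h p^n - p_h^n$ and $w_h^n = \rho_h u^n - u_h^n$ satisfy the fully discrete scheme \eqref{eq:syscvfd1}--\eqref{eq:syscvfd2} with $r_h^0 = w_h^0 = 0$ and right-hand sides $f_h^{1/2} = (i)+(ii)$ and $g_h^{1/2} = (i')+(ii')$ decomposed as in the proof of Theorem~\ref{theorem:fullydiscrete}. Applying Lemma~\ref{lem:fullydiscrete} for $n=1$ therefore reduces the problem to
\[
\|r_h^1\|_{L^2(\Omega)} + \|w_h^1\|_{L^2(\Omega)} \le C\tau\bigl(\|f_h^{1/2}\|_{L^2(\Omega)} + \|g_h^{1/2}\|_{L^2(\Omega)}\bigr),
\]
so it remains to produce a pointwise-in-time bound for the two residuals.

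Next I would estimate the projection-consistency parts $(i)$ and $(i')$ essentially as in the proof of Theorem~\ref{theorem:fullydiscrete}, obtaining for instance
\[
\|(i)\|_{L^2(\Omega)} \le C\tau^{-1} h^{k+2}\|a\|_{W^{1,\infty}(\Omega)} \int_0^\tau \|\dt p(s)\|_{H^{k+1}(\Omega)}\, ds
\]
and the analogous bound for $(i')$. The only modification would be to apply the crude estimate $\int_0^\tau \phi(s)\, ds \le \tau \max_{0 \le s \le \tau} \phi(s)$, which cancels the $\tau^{-1}$ prefactor and leaves $h^{k+2}$ multiplying the suprema of $\|\dt p\|_{H^{k+1}(\Omega)}$ and $\|\dt u\|_{H^{k+2}(\Omega)}$. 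For the Crank--Nicolson consistency terms $(ii)$ and $(ii')$, I would invoke the pointwise Taylor bound $|\dtau z^{1/2} - \dt z^{1/2}| \le C\tau^2 \max_{0 \le s \le \tau} |\dttt z(s)|$ in place of its integral form; combined with the $L^\infty$-bounds on $a$ and $b$, this gives $\|(ii)\|_{L^2(\Omega)} \le C\tau^2 \max_s \|\dttt p(s)\|_{L^2(\Omega)}$ and the analogous estimate for $(ii')$.

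Combining the four bounds and multiplying by the outer factor $\tau$ from Lemma~\ref{lem:fullydiscrete} produces exactly the claimed inequality. I do not anticipate a real obstacle: the lemma is essentially careful book-keeping of the first step of the argument underlying Theorem~\ref{theorem:fullydiscrete}, with the pointwise maximum replacing the telescoped integrals that are available only when summing over many time steps.
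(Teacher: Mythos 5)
Your argument is correct and is precisely what the paper intends: the paper gives no separate proof of Lemma~\ref{lem:firststep}, stating only that it follows ``by careful inspection of the previous proof,'' and your single-step application of the energy estimate of Lemma~\ref{lem:fullydiscrete} with vanishing initial discrete errors, together with the residual bounds from the proof of Theorem~\ref{theorem:fullydiscrete} and the replacement $\int_0^\tau \phi(s)\,ds \le \tau \max_{0\le s\le \tau}\phi(s)$, is exactly that inspection carried out. No gaps.
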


We can now prove the following bound for the discrete time derivative of the velocity, which will be required for the analysis of the post-processing scheme below.

\begin{lemma}[Error estimate for discrete time derivatives] $ $ \label{lemma:discretederiv} \\
Let the assumptions of Theorem~\ref{theorem:fullydiscrete} be valid. Then 
\begin{align*}
\|\dt u^{n-1/2} - \dtau u_h^{n-1/2} \|_{L^2(\Omega)} \le C \big (h^{k+1} C_4(p,u,t^n) + \tau^2 C_5(p,u,t^{n+1}) \big),
\end{align*} 
with $C_4(p,u,t)$, $C_5(p,u,t)$ only depending on the solution.
\end{lemma}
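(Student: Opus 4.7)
The plan is to mimic the argument of Lemma~\ref{lem:dtu} in the fully discrete setting: decompose the error into a temporal consistency term, a time-averaged projection term, and a discrete error term, and then estimate the last of these by applying the $\dtau$-operator to the error equations underlying the proof of Theorem~\ref{theorem:fullydiscrete}.

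First I would split
\begin{align*}
\dt u^{n-1/2} - \dtau u_h^{n-1/2}
= \bigl(\dt u^{n-1/2} - \dtau u^{n-1/2}\bigr) + \dtau(u - \rho_h u)^{n-1/2} + \dtau w_h^{n-1/2},
\end{align*}
where $w_h^n = \rho_h u(t^n) - u_h^n$ as in the proof of Theorem~\ref{theorem:fullydiscrete}. The first term is a purely temporal consistency mismatch between the trapezoidal average and the central difference at $t^{n-1/2}$, and is bounded by $C\tau^2 \sup_{[t^{n-1},t^n]}\|\dttt u\|_{L^2(\Omega)}$ via two Taylor expansions. The second term equals $\tau^{-1}\int_{t^{n-1}}^{t^n}(\dt u - \rho_h\dt u)(s)\,ds$ and is bounded by $C h^{k+1} \sup_{[t^{n-1},t^n]}\|\dt u\|_{H^{k+1}(\Omega)}$ by Lemma~\ref{lem:projections}. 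These two contributions feed into $C_4$ and $C_5$ respectively.

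For the third, more substantial term, the key observation is that by taking the difference of the discrete error equations (from the proof of Theorem~\ref{theorem:fullydiscrete}) at consecutive intermediate steps $n-1/2$ and $n-3/2$ and dividing by $\tau$, the sequences $R_h^n := \dtau r_h^{n-1/2}$ and $W_h^n := \dtau w_h^{n-1/2}$ themselves satisfy a Crank--Nicolson system of exactly the form \eqref{eq:syscvfd1}--\eqref{eq:syscvfd2}, with right-hand sides $\dtau f_h^{n-1}, \dtau g_h^{n-1}$ and initial data $(R_h^1, W_h^1) = \tau^{-1}(r_h^1, w_h^1)$. Lemma~\ref{lem:firststep} then bounds the initial values by $C(h^{k+2} + \tau^2)$ times solution-dependent norms, uniformly in $\tau$. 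Applying the energy estimate of Lemma~\ref{lem:fullydiscrete} to this differentiated system reduces matters to controlling $\sum_{k=2}^{n}\tau\bigl(\|\dtau f_h^{k-1}\|_{L^2(\Omega)} + \|\dtau g_h^{k-1}\|_{L^2(\Omega)}\bigr)$.

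The principal technical obstacle is this last summation. Recall that $f_h$ and $g_h$ consist of a spatial projection piece of the form $(a - \pi_h^0 a)(\pi_h \dtau p - \dtau p)$, respectively $b(\rho_h \dtau u - \dtau u)$, and a Crank--Nicolson consistency piece $a(\dtau p - \dt p)$, respectively $b(\dtau u - \dt u)$. Applying $\dtau$ converts the $\dtau p$'s and $\dtau u$'s into second divided differences that, after telescoping, can be rewritten as time integrals of $\dtt p$ and $\dtt u$; combined with the $W^{1,\infty}$-bound on $a-\pi_h^0 a$ and the $H^{k+1}$-estimates for $\pi_h$ and $\rho_h$, these yield $O(h^{k+1})$ contributions that can be absorbed into $C_4$. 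Differencing the temporal consistency pieces instead yields $O(\tau^2)$ contributions involving $\dttt p$ and $\dttt u$ (and possibly $\dtttt$-terms) which feed into $C_5$. Once this bookkeeping is executed so that the discrete sums collapse into integrals of the continuous higher time derivatives of $p$ and $u$ over $[0, t^{n+1}]$, the remainder of the argument proceeds in exact parallel with the proofs of Lemma~\ref{lem:dtu} and Theorem~\ref{theorem:fullydiscrete}.
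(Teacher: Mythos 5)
Your proposal is correct and follows essentially the same route as the paper: the identical three-term splitting (temporal consistency, projection error of the divided difference, discrete error), with the discrete part handled by observing that the divided differences $\dtau r_h,\dtau w_h$ satisfy a Crank--Nicolson system obtained by differencing consecutive error equations, whose initial data $\tau^{-1}(r_h^1,w_h^1)$ are controlled by Lemma~\ref{lem:firststep} and whose differenced right-hand sides yield the $h^{k+1}$ and $\tau^2$ contributions via the discrete energy estimate. The only differences are cosmetic (sup-norms versus time integrals, and the paper's $\tilde n$ indexing of the half-integer sequence).
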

\noindent
Recall that $C$ only depends on $\gamma$ and $k$. The precise forms of $C_4(p,u,t)$ and $C_5(p,u,t)$ can be deduced without difficulty from the proof of the theorem.
\begin{proof}
The error can be decomposed into 
\begin{align*}
\|\dt u^{n-1/2} - \dtau u_h^{n-1/2} \|_{L^2(\Omega)}
\le &\|\dt u^{n-1/2} - \dtau u^{n-1/2}\|_{L^2(\Omega)} 
   + \|\dtau u^{n-1/2} - \dtau \rho_h u^{n-1/2}\|_{L^2(\Omega)}\\
 & + \|\dtau \rho_h u^{n-1/2} - \dtau u_h^{n-1/2}\|_{L^2(\Omega)}
=:(i)+(ii)+(iii). 
\end{align*}
The first two terms (i) and (ii) can be estimated similarly as in Theorem~\ref{theorem:fullydiscrete} by
\begin{align*}
(i) &= \|\dt u^{n-1/2} - \dtau u^{n-1/2}\|_{L^2(\Omega)} 
    \le \tau \int_{t^{n-1}}^{t^n} \|\dttt u(s)\|_{L^2(\Omega)} ds,  
\qquad \text{and} \\
(ii) &= \|\dtau u^{n-1/2} - \dtau \rho_h u^{n-1/2}\|_{L^2(\Omega)} 
     \le C \tau^{-1} h^{k+1} \int_{t^{n-1}}^{t^n} \|\dt u(s)\|_{H^{k+1}(\Omega)}.
\end{align*}
To estimate the third contribution, we define 
functions $r_h^{n-1/2} = \dtau \pi_h  p^{n-1/2} - \dtau p_h^{n-1/2}$
and $w_h^{n-1/2} = \dtau \rho_h u^{n-1/2} - \dtau u_h^{n-1/2}$.
Moreover, we denote by 
\begin{align*}
\dtau p^{\tilde n} := \frac{1}{\tau} \big(p^{n+1/2}-p^{n-1/2}\big)
\qquad \text{and} \qquad  
p^{\tilde n} := \frac{1}{2} \big( p^{n+1/2}+p^{n-1/2}\big)
\end{align*}
the differences and averages for a sequence $\{p^{n-1/2}\}_{n \ge 1}$ of functions defined at intermediate time steps $t^{n-1/2}$.
The superscipt $\tilde n$ is used to emphasize that, e.g., $f^{\tilde n}$ is a  combination of values at intermediate time steps.  
The discrete error components $r_h^{n-1/2}$ and $w_h^{n-1/2}$ defined above 
can then be seen to satisfy a discrete variational problem of the form
\begin{alignat*}{2}
(a \dtau r_h^{\tilde n},q_h)_\Omega+(\div\, w_h^{\tilde n} ,q_h)_\Omega &=(f_h^{\tilde n},q_h)_\Omega \qquad &&\forall q_h\in Q_h, \\ 
(b \dtau w_h^{\tilde n},v_h)_\Omega - (r_h^{\tilde n},\div\, v_h)_\Omega &=(g_h^{\tilde n},v_h)_\Omega &&\forall v_h\in V_h 
\end{alignat*}
for all $n \ge 1$, and with initial values given by
\begin{align*}
r_h^{1/2} &= \dtau \pi_h p^{1/2} - \dtau p_h^{1/2} = \tau^{-1} (\pi_h p^1 - p_h^1), \\
w_h^{1/2} &= \dtau \rho_h u^{1/2} - \dtau u_h^{1/2} = \tau^{-1} (\rho_h u^1 - u_h^1),
\end{align*}
and right hand sides defined by
\begin{align*}
f_h^{\tilde n} = a \; (\dtau \dtau \pi_h p^{\tilde n} - \dtau \dt p^{\tilde n})
\qquad \text{and} \qquad 
g_h^{\tilde n} &=  b \; (\dtau \dtau \rho_h u^{\tilde n} - \dtau \dt u^{\tilde n}).
\end{align*}
With similar arguments as used in the proof of Theorem~\ref{theorem:fullydiscrete}, we obtain the bounds
\begin{align*}
\tau \|f_h^{\tilde n}\|_{L^2(\Omega)} &\le C \int_{t^{n-1}}^{t^{n+1}} h^{k+1} \|\dtt p(s)\|_{H^{k+1}\Omega)} + \tau^2  \|\dtttt p(s)\|_{L^2(\Omega)} ds\\
\tau \|g_h^{\tilde n}\|_{L^2(\Omega)} &\le C \int_{t^{n-1}}^{t^{n+1}} h^{k+1} \|\dtt u(s)\|_{H^{k+1}(\Omega)} + \tau^2  \|\dtttt u(s)\|_{L^2(\Omega)} ds,
\end{align*}
and the initial values $r_h^{1/2}$ and $w_h^{1/2}$ can be bounded by means of Lemma~\ref{lem:firststep}.
Via a discrete energy estimate similar to Lemma~\ref{lem:fullydiscrete},
we then obtain bounds for $\|w_h^{\tilde n}\|_{L^2(\Omega)}$ and $\|r_h^{\tilde n}\|_{L^2(\Omega)}$ for all $n \ge 0$. 
This yields the required estimate for the remaining term (iii).
\end{proof}

\begin{remark}
Under appropriate smoothness assumptions, one can again obtain bounds of the form $\|\dt u^{n-1/2} - \dtau u_h^{n-1/2} \|_{L^2(\Omega)} = O(h^{r} + \tau^\sigma)$ for all $r \le k+2$ and $\sigma \le 2$, which
allows us to explain the convergence behavior also for less regular solutions.
\end{remark}

\section{Post-processing for the full discretization} \label{sec:fullpost}

\subsection{The post-processing strategy}

Following the construction on the semi-discrete level, we now propose
the following strategy for post-processing the pressure approximations obtained 
with the fully discrete scheme.%

\begin{problem}[Local post-processing strategy for the full discretization] $ $\\
For all $0 < t^n \le T$ find $\widetilde p_h^{\;n-1/2} \in \P_{k+1}(\Th)$ such that
\begin{alignat}{2}
(\grad\widetilde p_h^{\;n-1/2},\grad \widetilde q_h)_K&=
(g^{n-1/2},\grad \widetilde q_h)_K-(b\dtau\u_h^{n-1/2},\grad \widetilde q_h)_K\quad &&\forall \widetilde q_h\in \P_{k+1}(K) \label{eq:sysppd1} \\ 
(\widetilde p_h^{\;n-1/2},q_h^0)_K&=(p_h^{n-1/2},q_h^0)_K &&\forall q_h^0\in \P_0(K) \label{eq:sysppd2},
\end{alignat}
\end{problem}
Note that $\widetilde p_h^{\;n-1/2}$ can be be constructed independently on every element $K \in T_h$ and for every time step $t^{n-1/2}$, which makes this procedure computationally attractive.

\subsection{Analysis of the post-processing scheme}
As the next theorem shows, the improved pressure approximations again exhibit super-convergence.
\begin{theorem}\label{theorem:post-processingfull}
Assume that $(p,u)$ is sufficiently smooth. Then 
\begin{align*}
\|p^{n-1/2}-\widetilde p_h^{\;n-1/2}\|_{L^2(\Omega)}
\leq C \big( h^{k+2} C_6(p,u,t^{n}) + \tau^2 C_7(p,u,t^{n+1})\big)
\end{align*}
with $C_6(p,u,t^n)$ and $C_7(p,u,t^n)$ depending only on the solution $(p,u)$ on  $[0,t^n]$. 
\end{theorem}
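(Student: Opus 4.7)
The plan is to mimic closely the argument used for Theorem~\ref{theorem:semidiscrete-pp}, now at the intermediate time level $t^{n-1/2}$, and to absorb the additional time-discretization error through Lemma~\ref{lemma:discretederiv}. Let $\widetilde\pi_h: L^2(\Omega) \to \P_{k+1}(\Th)$ denote the $L^2$-projection. For each $K\in\Th$ I would begin with the three-term splitting
\begin{align*}
\|p^{n-1/2} - \widetilde p_h^{\;n-1/2}\|_{L^2(K)}
&\le \|p^{n-1/2} - \widetilde\pi_h p^{n-1/2}\|_{L^2(K)} \\
&\quad + \|\pi_h^0(\widetilde\pi_h p^{n-1/2} - \widetilde p_h^{\;n-1/2})\|_{L^2(K)} \\
&\quad + \|(\text{id}-\pi_h^0)(\widetilde\pi_h p^{n-1/2} - \widetilde p_h^{\;n-1/2})\|_{L^2(K)}
\end{align*}
used in the semi-discrete proof. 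The first summand immediately yields an $O(h_K^{k+2})$ contribution. By orthogonality of $\widetilde\pi_h,\pi_h,\pi_h^0$ and the constraint \eqref{eq:sysppd2}, the second summand collapses to $\|\pi_h p^{n-1/2} - p_h^{n-1/2}\|_{L^2(K)}$; since $\pi_h$ commutes with the averaging $\tfrac12(p^n+p^{n-1})$, summing over $K\in\Th$ and applying Theorem~\ref{theorem:fullydiscrete} at $t^n$ and $t^{n-1}$ produces the bound $C(h^{k+2}C_1(p,u,t^n) + \tau^2 C_3(p,u,t^n))$.

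For the third summand, set $\widetilde q_h = (\text{id}-\pi_h^0)(\widetilde\pi_h p^{n-1/2} - \widetilde p_h^{\;n-1/2})$. The Poincaré-Wirtinger inequality on the convex simplex $K$ gives $\|\widetilde q_h\|_{L^2(K)} \le h_K \|\nabla \widetilde q_h\|_{L^2(K)}$, and the gradient is estimated through
\begin{align*}
\|\nabla \widetilde q_h\|_{L^2(K)}^2
&= (\nabla(\widetilde\pi_h p^{n-1/2}-p^{n-1/2}),\nabla \widetilde q_h)_K \\
&\quad + (\nabla(p^{n-1/2}-\widetilde p_h^{\;n-1/2}),\nabla \widetilde q_h)_K.
\end{align*}
The first inner product is handled by the standard $H^1$-projection error bound $\|\nabla(\widetilde\pi_h p^{n-1/2}-p^{n-1/2})\|_{L^2(K)} \le Ch_K^{k+1}|p^{n-1/2}|_{H^{k+2}(K)}$. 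For the second, averaging equation \eqref{eq:sysm2} at $t^n$ and $t^{n-1}$ yields $\nabla p^{n-1/2} = g^{n-1/2} - b\,\dt u^{n-1/2}$, while the post-processing identity \eqref{eq:sysppd1} reads $(\nabla \widetilde p_h^{\;n-1/2},\nabla \widetilde q_h)_K = (g^{n-1/2} - b\,\dtau u_h^{n-1/2},\nabla \widetilde q_h)_K$. Subtracting, the $g^{n-1/2}$ contributions cancel and the term reduces to
\begin{align*}
-(b\,(\dt u^{n-1/2} - \dtau u_h^{n-1/2}),\nabla \widetilde q_h)_K,
\end{align*}
which is exactly the quantity controlled by Lemma~\ref{lemma:discretederiv}.

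The main obstacle, and indeed the whole reason that Lemma~\ref{lemma:discretederiv} was proved beforehand, lies in this last mismatch: the continuous time derivative $\dt u^{n-1/2}$ that is forced upon us by the momentum equation does not coincide with the discrete difference $\dtau u_h^{n-1/2}$ appearing in the post-processing scheme, and this discrepancy contributes both the $h^{k+1}$ spatial component and the $\tau^2$ Crank-Nicolson consistency component to the error. Once the estimate of Lemma~\ref{lemma:discretederiv} is inserted, multiplying by the extra factor $h_K$ (absorbing the $h^{k+1}$ into $h^{k+2}$), summing over $K\in\Th$ and combining all three pieces of the splitting yields the claimed bound, with $C_6(p,u,t^n)$ and $C_7(p,u,t^{n+1})$ formed by amalgamating $C_1,C_3,C_4,C_5$ together with the $H^{k+2}$-seminorm of $p^{n-1/2}$.
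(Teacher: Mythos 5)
Your proposal is correct and follows essentially the same route as the paper: the paper's proof simply repeats the splitting of Theorem~\ref{theorem:semidiscrete-pp} at the intermediate time level, bounds the projected pressure term via Theorem~\ref{theorem:fullydiscrete}, and controls the mismatch $\dt u^{n-1/2}-\dtau u_h^{n-1/2}$ arising in the gradient estimate by Lemma~\ref{lemma:discretederiv}, exactly as you do. You in fact supply more detail than the paper (e.g.\ the cancellation of the $g^{n-1/2}$ terms and the commutation of $\pi_h$ with the time average), and your identification of Lemma~\ref{lemma:discretederiv} as the essential new ingredient is precisely the point.
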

The precise form of $C_6(p,u,t)$ and $C_7(p,u,t)$ can again be deduced 
without difficulty from the proof and the previous results.
\begin{proof}
Using exactly the same arguments as in the proof of Theorem~\ref{theorem:semidiscrete-pp}, we obtain
\begin{align*}
\|p^{n-1/2}&-\widetilde p_h^{\;n-1/2}\|_{L^2(\Omega)}
\leq \|p^{n-1/2}-\widetilde \pi_h p^{n-1/2}\|_{L^2(\Omega)} + 
\|\pi_h p^{n-1/2}-p_h^{n-1/2}\|_{L^2(\Omega)}\\
& + C\,h \; \left(\|\grad(\widetilde \pi_h p^{n-1/2}-p^{n-1/2})\|_{L^2(\Omega)} + 
\|\dt u^{n-1/2}-\dtau u_h^{n-1/2})\|_{L^2(\Omega)}\right)
\end{align*}
The first and third term can be bounded by the projection error estimates.
The second can be estimated as in Theorem~\ref{theorem:fullydiscrete}, and the last one can be bounded by Lemma~\ref{lemma:discretederiv}.
\end{proof}

\begin{remark}
A careful inspection of the proof shows that it would be sufficent to guarantee that 
$h \|\dt u^{n-1/2}-\dtau u_h^{n-1/2}\|_{L^2(\Omega)} = O(h^{k+2} + \tau^2)$.
If $h \le C \tau$, which is a reasonable assumption, the estimate $\|\dt u^{n-1/2}-\dtau u_h^{n-1/2}\|_{L^2(\Omega)}=O(h^{k+1} + \tau)$ 
would therefore be sufficient. The smoothness requirements in Lemma~\ref{lemma:discretederiv} could then be further relaxed.
\end{remark}

\section{Numerical tests} \label{sec:num}
We now illustrate our theoretical results by some numerical tests. 
In order to demonstrate that the results hold without regularity of 
the adjoint problem, we consider as computational domain the L-shape $\Omega=(-1,1)^2\setminus[0,1]^2$, which is non-convex.
For all tests, the model parameters are set to $a=2,b=1$, and 
we choose the right hand sides to be $f=g=0$. The time horizon is set to $T=1$.
We only report about results for $\BDM_1$--$\P_0$ elements,
which corresponds to the lowest order case $k=0$ in the theorems. 

\subsection{Smooth solution}
In the first test, we define the initial values to be
\begin{align*}
u_0(x,y)=[0;0]
\quad \text{and} \quad 
p_0(x,y)=\sin(\pi x) \sin(\pi y).
\end{align*}
The solution of problem \eqref{eq:sysm1}--\eqref{eq:sysm4} 
with the above problem data is then given by 
\begin{align*}
p(x,y,t)&= \sin(\pi x) \sin(\pi y) \cos(\pi t),\\
u(x,y,t)&=-[\cos(\pi x) \sin(\pi y);\sin(\pi x)\cos(\pi y)] \sin(\pi t).
\end{align*}
Since the solution is infinitely differentiable here, 
we expect to obtain the full convergence of order $O(h^2+\tau^2)$.
For our first series of tests, we used a small time step of $\tau=1/1000$, 
such that the effects of time discretization can be neglected.
As error measure, we utilize the norm $\tnorm e\tnorm:=\max_{0 \le t^n \le T} \|e(t^n)\|_{L^2(\Omega)}$. 
To simplify the computations, the exact solution is approximated by piecewise constant and piecewise linear functions obtained by $L^2$-projections $\pi^0_h$ and $\pi_h^1$.

In Table~\ref{tab:1} we list the results obtained for a sequence of uniformly refined meshes. 
\begin{table}[ht!]
\begin{tabular}{c||c|c||c|c||c|c} 
$h$ & $\tnorm \pi^1_h u - u_h\tnorm$ & eoc & $\tnorm\pi^0_h p - p_h\tnorm$ & eoc & $\tnorm \pi^1_h p - \widetilde p_h \tnorm$ &eoc \\
\hline
\hline
$2^{-2}$ & $0.492260$ & ---    & $0.444531$ & ---    & $0.492531$ & ---    \\
$2^{-3}$ & $0.142499$ & $1.79$ & $0.136521$ & $1.70$ & $0.144388$ & $1.77$ \\
$2^{-4}$ & $0.037066$ & $1.94$ & $0.036047$ & $1.92$ & $0.037627$ & $1.94$ \\
$2^{-5}$ & $0.009359$ & $1.99$ & $0.009128$ & $1.98$ & $0.009499$ & $1.99$ \\
$2^{-6}$ & $0.002347$ & $2.00$ & $0.002290$ & $1.99$ & $0.002380$ & $2.00$ \\
$2^{-7}$ & $0.000586$ & $2.00$ & $0.000572$ & $2.00$ & $0.000594$ & $2.00$ 
\end{tabular}
\medskip
\caption{Errors vs mesh size $h$ obtained with the $\BDM_1$--$\P_0$ method on the L-shape geometry with smooth exact solution and time step $\tau=1/1000$.\label{tab:1}} 
\end{table}
As predicted by Theorem~\ref{theorem:semidiscrete},
we observe the optimal second order convergence for the velocity error and the super-convergence for the projected pressure error. 
By the post-processing procedure, we obtain a piecewise linear approximation $\widetilde p_h$ for the pressure, which also shows second order convergence as stated in Theorem~\ref{theorem:semidiscrete-pp}.

In Table~\ref{tab:1a}, we list the corresponding results obtained on the finest mesh with mesh size $h=2^{-7}$ and for varying size of the time step $\tau$. 
\begin{table}[ht!]
\begin{tabular}{c||c|c||c|c||c|c} 
$\tau$ & $\tnorm \pi^1_h u - u_h\tnorm$ & eoc & $\tnorm\pi^0_h p - p_h\tnorm$ & eoc & $\tnorm \pi^1_h p - \widetilde p_h \tnorm$ & eoc \\
\hline
\hline
$2^{-2}$ & $0.180514$ & ---    & $0.071484$ & ---    & $0.101313$ & ---    \\
$2^{-3}$ & $0.048197$ & $1.90$ & $0.019738$ & $1.86$ & $0.028855$ & $1.81$ \\
$2^{-4}$ & $0.012163$ & $1.99$ & $0.004928$ & $2.00$ & $0.007376$ & $1.97$ \\
$2^{-5}$ & $0.002958$ & $2.04$ & $0.001164$ & $2.08$ & $0.001762$ & $2.07$ \\
$2^{-6}$ & $0.000645$ & $2.20$ & $0.000231$ & $2.33$ & $0.000348$ & $2.34$ \\
$2^{-7}$ & $0.000014$ & $2.20$ & $0.000129$ & $0.83$ & $0.000009$ & $1.96$ 
\end{tabular}
\medskip
\caption{Errors vs time step $\tau$ obtained with the $\BDM_1$--$\P_0$ method on the L-shape geometry with smooth exact solution and mesh size $h=2^{-7}$.\label{tab:1a}} 
\end{table}
As predicted by Theorems~\ref{theorem:fullydiscrete} and \ref{theorem:post-processingfull}, we also obtain second order convergence with respect to time step $\tau$. 
A comparison with Table~\ref{tab:1} shows that for the smallest time step, the discretization error due to spatial discretization already starts to dominate, which explains the saturation in the second column.  
Overall, the numerical results for the case of a smooth solution are in very good  agreement with the theoretical predictions.

\subsection{Non-smooth solution}

As a second test case, we consider the same geometry and parameters as in the previous example, but as initial conditions we now choose
\begin{align*}
u_0(x,y)=[0;0]
\quad \text{and} \quad p_0(x,y)=\begin{cases} \sin(\pi x) \sin(\pi y), & -1 \le x,y, \le 0, \\ 0, & \text{else}.\end{cases}
\end{align*}
Note that $p_0$ has a kink and therefore $p(0) \not\in H^{3/2}(\Omega)$ but $p(0) \in H^{3/2-\eps}(\Omega)$ for any $\eps>0$. As a consequence, we have $\dt u(0) = -\nabla p(0) \in H^{1/2-\eps}(\Omega)^2$ but $\dt u(0) \not\in H^{1/2}(\Omega)^2$. 
The same regularity of the functions and the time derivatives is 
preserved for $t>0$. 

Following our remarks after Theorems~\ref{theorem:fullydiscrete} and \ref{theorem:post-processingfull}, we would expect here to obtain at least convergence of order $h^{1/2-\eps}$ for the error in the velocity, the projected pressure error, and for the post-processed pressure. 
Since no analytical solution is available here, we utilize the 
distance to the solution on the next finer mesh to evaluate the error.
The results of our numerical experiments are listed in Tables~\ref{tab:2} and \ref{tab:2a}.
\begin{table}[ht!]
\begin{tabular}{c||c|c||c|c||c|c} 
$h$ & $\tnorm u_{h/2} - u_h\tnorm$ & eoc & $\tnorm\pi_h p_{h/2} - p_h\tnorm$ & eoc & $\tnorm\pi^1_h p_{h/2} - \widetilde p_h \tnorm$ &eoc \\
\hline
\hline
$2^{-2}$ & $0.188661$ & ---    & $0.107367$ & ---    & $0.136479$ & ---    \\
$2^{-3}$ & $0.086160$ & $1.13$ & $0.037058$ & $1.53$ & $0.047367$ & $1.53$ \\
$2^{-4}$ & $0.035437$ & $1.28$ & $0.016544$ & $1.16$ & $0.019431$ & $1.29$ \\
$2^{-5}$ & $0.018877$ & $0.91$ & $0.008794$ & $0.91$ & $0.009787$ & $0.99$ \\
$2^{-6}$ & $0.010418$ & $0.86$ & $0.004480$ & $0.97$ & $0.004790$ & $1.03$ 
\end{tabular}
\medskip
\caption{Errors vs mesh size $h$ obtained with the $\BDM_1$--$\P_0$ method on the L-shape geometry with non-smooth solution and time step $\tau=1/1000$.\label{tab:2}} 
\end{table}

We here observe convergence of approximately order one with respect to the mesh size, 
which is actually better than guaranteed by Theorem~\ref{theorem:semidiscrete} and \ref{theorem:semidiscrete-pp}. 
To evaluate the convergence with respect to the time step size, we 
repeat the computations on the finest mesh with mesh size $h=2^{-7}$ and for different values of $\tau$. The error is here computed by comparison with the numerical solution obtained with time step $\tau/2$. 
The corresponding results are depicted in Table~\ref{tab:2a}.
\begin{table}[ht!]
\begin{tabular}{c||c|c||c|c||c|c} 
$\tau$ & $\tnorm u_h^{\tau/2} - u_h\tnorm$ & eoc & $\tnorm p_h^{\tau/2} - p_h\tnorm$ & eoc & $\tnorm  \widetilde p_h^{\;\tau/2} - \widetilde p_h \tnorm$ & eoc \\
\hline
\hline
$2^{-2}$ & $0.071837$ & ---    & $0.043523$ & ---    & $0.059628$ &  ---    \\
$2^{-3}$ & $0.034591$ & $1.05$ & $0.022490$ & $0.95$ & $0.022620$ &  $1.40$ \\
$2^{-4}$ & $0.016427$ & $1.07$ & $0.011389$ & $0.98$ & $0.010293$ &  $1.14$ \\
$2^{-5}$ & $0.008140$ & $1.01$ & $0.005732$ & $0.99$ & $0.005231$ &  $0.97$ \\
$2^{-6}$ & $0.004081$ & $1.00$ & $0.002883$ & $0.99$ & $0.002713$ &  $0.95$ 
\end{tabular}
\medskip
\caption{Errors vs time step $\tau$ obtained with the $\BDM_1$--$\P_0$ method on the L-shape geometry with non-smooth solution and mesh size $h=2^{-7}$.\label{tab:2a}} 
\end{table}

Again we observe convergence of approximately order one also with respect to the time step size, which is better than actually predicted by Theorems~\ref{theorem:fullydiscrete} and \ref{theorem:post-processingfull}.
The numerical results thus indicate that the smoothness requirements in our theorems could possibly be further relaxed. 
Let us note that a similar sub-optimality with respect to the smoothness requirements can also be found in  \cite{Baker76,BakerBramble79,BangerthRannacher99,Chen96,Chen99,CowsarDupontWheeler96,DouglasDupontWheeler78,Dupont73,Geveci88,JenkinsRiviereWheeler02,Makridakis92,MakridakisMonk95,Monk92}.
A full explanation of the convergence rates for finite element approximations of hyperbolic problems under mimimal smoothness assumptions therefore seems to be an open problem.

\section{Discussion}

In this paper we considered the numerical approximation of the acoustic wave equation by a mixed finite element method based on $\BDM_{k+1}$-$\P_k$ elements. 
Optimal convergence for the error in the velocity and super-convergence of the projected pressure order was established. 
Based on these results, we proposed a post-processing strategy that allows to 
obtain an improved approximation for the pressure in $\P_{k+1}$ by purely local computations.
This shows that the $\BDM_{k+1}$-$\P_k$ discretization is in fact a balanced 
approximation, although different polynomial orders are used.
Since no duality argument nor inverse inequalities are used in our analysis, 
the results hold for general domains and for unstructured as well as locally refined meshes.

The a-priori error estimates and the post-processing strategy were extended to fully discrete schemes obtained by time discretization via the Crank-Nicolson method. The generalization of the arguments to time discretizations of higher order is possible.

In view of numerical efficiency, it might be desirable to consider explicit time integration schemes together with some appropriate mass lumping strategy \cite{Cohen02}. 
Numerical experiments indicate that the super-convergence of the projected pressure error remains valid also in that case. The error of the velocity approximation however is reduced by one order as effect of the consistency errors introduced by the mass-lumping. 
It is not clear, if the optimal approximation order in both variables can be restored by another type of post-processing, and we leave this as an open topic for future research.

Our numerical results also indicate 
that the regularity requirements in the error estimates could possibly be further relaxed. 
This applies to all estimates for mixed finite element approximations of wave propagation problems, we are aware of. It would thus be desirable to further relax the smoothness requirements in order to be able to fully explain the convergence behavior also in the case of non-smooth solutions.

\section*{Acknowledgements}

The authors are grateful for financial support by the German Research Foundation (DFG) via grants IRTG~1529 and TRR~154,
and by the ``Excellence Initiative'' of the German Federal and State Governments via the Graduate School of Computational Engineering GSC~233 at Technische Universität Darmstadt.


\end{document}